\documentclass{elsarticle}

\usepackage[latin1]{inputenc}
\usepackage[all,cmtip]{xy}
\usepackage{array}
\usepackage{amsmath, amssymb}
\usepackage{graphics}
\usepackage{rotating}
\usepackage{caption}
\usepackage{subcaption}
\usepackage{amsthm}
\usepackage{algorithm}
\usepackage{algorithmic}
\usepackage{booktabs}
\usepackage{color}
\usepackage{cancel}
\usepackage{caption}
\usepackage{subcaption}
\usepackage{pdflscape}
\usepackage{epstopdf}
\usepackage{subcaption}
\usepackage{multirow}
\usepackage{graphicx}
\usepackage[hidelinks]{hyperref}
\usepackage{multicol}
\usepackage{float}
\usepackage[top=4.4cm,left=4.7cm, right=4.75cm, bottom=4.2cm]{geometry}

\def\deg{{\rm deg}}

\newtheorem{theorem}{{\bf Theorem}}
\newtheorem{remark}{{\bf Remark}}

\newtheorem{proposition}[theorem]{{\bf Proposition}}
\newtheorem{lemma}[theorem]{{\bf Lemma}}
\newtheorem{example}{{\bf Example}}

\def\bfb{{\boldsymbol{b}}}

\def\bfp{{\boldsymbol{p}}}
\def\bfq{{\boldsymbol{q}}}

\def\bfx{{\boldsymbol{x}}}

\def\bfQ{{\boldsymbol{Q}}}

\def\RR{\mathbb{R}}



\begin{document}

\begin{frontmatter}
\title{Computing the symmetries of a ruled rational surface.}


\author[a]{Juan Gerardo Alc\'azar\fnref{proy,proy2}}
\ead{juange.alcazar@uah.es}
\author[a]{Emily Quintero\fnref{proy3}}
\ead{emily.quintero@edu.uah.es}

\address[a]{Departamento de F\'{\i}sica y Matem\'aticas, Universidad de Alcal\'a,
E-28871 Madrid, Spain}


\fntext[proy]{Supported by the Spanish ``Ministerio de
 Econom\'ia y Competitividad" under the Project MTM2014-54141-P.}

\fntext[proy2]{Member of the Research Group {\sc asynacs} (Ref. {\sc ccee2011/r34}) }

\fntext[proy3]{Supported by a grant from the Carolina Foundation.}


\begin{abstract}
We present a method for computing all the symmetries of a rational ruled surface defined by a rational parametrization which works directly in parametric rational form, i.e. without computing or making use of the implicit equation of the surface. The method proceeds by translating the problem into the parameter space, and relies on polynomial system solving. If we want all the symmetries of the surface, including rotational symmetries, we need to deal with polynomial systems in four variables; if we are only interested in involutions (e.g. central symmetries, axial symmetries, reflections in a plane), we can come down to bivariate polynomial systems. An application to compute symmetries of an implicit algebraic surface under certain conditions is also provided. 
\end{abstract}

 \end{frontmatter}

\section{Introduction}\label{section-introduction}

Symmetry is a property commonly found in nature and in manufactured items, usually associated with the ideas of beauty and proportion. In Geometry, and in particular when studying algebraic surfaces, symmetry arises frequently, associated with the same idea of beauty. 

Symmetries of 3-space, also called isometries, are well classified \cite{Coxeter}, and comprise translations, central symmetries, reflections in a plane, rotational symmetries (with axial symmetries as a special case), and their compositions. A symmetry of a surface is a symmetry of 3-space that leaves the surface invariant. In particular, symmetries of 3-space are orthogonal transformations. 

Knowing the symmetries of a surface is useful in order to understand the geometry of the surface and visualize the surface correctly. It is also useful in applications, for instance image storage, object detection and recognition, or medial axis computations. In fact, in the literature of applied fields like Computer Aided Geometric Design, Pattern Recognition or Computer Vision one can find many methods to detect symmetries (see for instance the Introduction to \cite{AH15}), although these methods are usually applied to objects where no specific structure is assumed, and are more orientented towards finding ``approximate" symmetries, in some sense. 

In contrast, in this paper we address a type of surfaces with a strong structure, namely rational ruled algebraic surfaces, and we make use of the structure of the surface in order to compute its symmetries. Additionally, we assume that the ruled surface to be analyzed is given by a rational parametrization in standard form. 

A naive approach to compute the symmetries of such an object is to compute first the implicit equation of the surface, which can be efficiently done by using $\mu$-bases \cite{Chen1, Chen2}, to pick a generic orthogonal transformation, and finally to impose that the orthogonal transformation leaves the surface invariant. This approach leads to a polynomial system in the degree $N$ of the implicit surface, with 12 variables, the parameters of the orthogonal transformation. The resulting polynomial system, however, is typically very big, so this approach is often impractical. Nevertheless, to our knowledge this method was so far the only alternative to solve the problem considered here. 

In this paper we use a different approach, inspired in previous works on symmetries of rational curves \cite{AHM15, HJ18} and polynomially parametrized surfaces \cite{AH15}. The idea is to reduce the problem to computations in the parameter space. In order to do this, we observe that whenever the parametrization defining the surface is \emph{proper}, i.e. birational, any symmetry of the surface comes from a birational transformation of the parameter space (the plane), i.e. a Cremona transformation. Taking advantage of the structure of the surface, in our case of the fact that the surface is ruled, we prove that the corresponding birational transformation of the parameter space has a predictable form, which makes possible to compute it. From here, the symmetry itself can be found.

From a computational point of view, our method relies, as in the naive approach mentioned before, on polynomial system solving. However, on one hand our method works directly with the rational parametrization of the surface, so we do not need to compute or make use of the implicit equation of the surface. On the other hand, the polynomial system we end up with has either 4 variables, in case that we are interested in all the symmetries of the surface, including rotational symmetries, or 2 variables, if we are only interested in involutions (e.g. central symmetries, axial symmetries, reflections in a plane). 

Additionally, we show that our results can be applied to implicit algebraic surfaces under certain conditions. In order to do this, we take advantage of two facts: first, the form of highest degree of the implicit equation of the surface always defines a conic surface, which in particular is a ruled surface. Second, knowing the symmetries of the surface defined by the form of highest degree provides clues on some of the symmetries of the entire surface. 

The structure of the paper is the following. We start with a preliminary section, Section \ref{gen-surf}, where we fix the hypotheses required on the input, together with some general notions to be used later on the paper. The main ideas behind the method, and the computational method itself, are presented in Section \ref{sec-symmetries}. Section \ref{sec-exp} reports on experimentation results obtained with the help of the computer algebra system Maple 18. Section \ref{app} presents an application of our ideas to implicit algebraic surfaces, under certain conditions. We close with a Conclusion section in Section \ref{sec-conclusion}.

\section{Preliminaries.} \label{gen-surf}

Let $S$ be a real ruled surface, defined by means of a real, rational parametrization in so-called ``standard form"
\begin{equation}\label{surf}
\pmb{x}(t,s)=\pmb{p}(t)+s\cdotp\pmb{q}(t).
\end{equation}
At each point $P=\pmb{x}(t,s)\in S$ the vector $\pmb{q}(t)$ defines the direction of the \emph{ruling} through $P$, i.e. the line $L_P$ through $P$ contained in $S$. In this paper we will assume that $S$ is not cylindrical, so that $\pmb{q}(t)$ is not a multiple of a constant vector, and that $S$ is not a surface of revolution. One can detect cylindrical surfaces and surfaces of revolution using the results in \cite{AG17}. Furthermore, if $S$ is either cylindrical or a surface of revolution, the symmetries of $S$ can be computed by using ideas in \cite{AH15}: see \cite[\S 2.2.4]{AH15} for surfaces of revolution, and \cite[\S 4]{AH15} for cylindrical surfaces. In particular, if $S$ is neither a surface of revolution nor a cylindrical surface, then the number of symmetries of $S$ is finite and translations are not involved (see \cite{AH15}). 

Furthermore, we will also suppose that $S$ is not doubly-ruled, i.e. that there are not two different families of rulings contained in $S$. It is well-known that the doubly-ruled surfaces are the plane, the hyperbolic paraboloid, and the single-sheeted hyperboloid. For paraboloids and hyperboloids one can find the symmetries of the surface by first computing the implicit equation, which is easy to do in the case of quadrics, and then performing easy calculations on the matrix defining the quadric (essentially, computing the eigenspaces of the matrix).

In the rest of the paper we will assume  that $\bfx(t,s)$ is \emph{proper}, i.e. that the parametrization in Eq. \eqref{surf} is injective except at most a 1-dimensional subset of $S$; in particular, this implies that $\bfx^{-1}$ exists and is rational. Additionally, we need two more assumptions on Eq. \eqref{surf}. First, and this will be important in order to develop our results, we will suppose that $\bfq(t)$ is polynomially parametrized, so that the components of $\bfq(t)$ have no denominators. We will also suppose that the (polynomial) components of $\bfq(t)$ are relatively prime, i.e. writing $\bfq(t)=(q_1(t),q_2(t),q_3(t))$, we will assume that $\gcd(q_1(t),q_2(t),q_3(t))=1$. 

Let us see that we can always achieve the two last requirements ($\bfq(t)$ polynomial, with components relatively prime), so that the above assumptions can be considered as completely general. Indeed, if some of them do not hold, then we can replace 
\begin{equation} \label{replace}
\bfq(t):=\mu(t) \bfq(t),\mbox{ }\mu(t)=\frac{\mu_1(t)}{\mu_2(t)},
\end{equation}
where $\mu_1(t)$ is the least common multiple of the denominators of the components of $\bfq(t)$, and $\mu_2(t)$ is the greatest common divisor of the numerators of the components of $\bfq(t)$. Notice that since $\mu(t) \bfq(t)$ is parallel to $\bfq(t)$ for all $t$, the new parametrization $\widehat{\bfx}(t,s)=\bfp(t)+s \mu(t)\bfq(t)$ also defines the surface $S$, because the rulings of the surfaces defined by $\bfx(t,s)$ and $\widehat{\bfx}(t,s)$ coincide. Furthermore, we can do this without losing properness, as shown by the next lemma. 

\begin{lemma}\label{prop-new}
Let $S$ be a ruled surface, and let $\bfx(t,s)$ be a proper parametrization of $S$ defined by Eq. \eqref{surf}. Let $\widehat{\bfx}(t,s)=\bfp(t)+s \mu(t)\bfq(t)$, where $\mu(t)$ is defined as in Eq. \eqref{replace}. Then $\widehat{\pmb{x}}(t,s)$ is also proper.
\end{lemma}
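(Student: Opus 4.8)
The plan is to show that $\widehat{\bfx}(t,s) = \bfp(t) + s\mu(t)\bfq(t)$ is injective off a $1$-dimensional subset by relating its fibers to those of the original proper parametrization $\bfx(t,s) = \bfp(t) + s\bfq(t)$. First I would observe that for every fixed $t$, the map $s \mapsto s\mu(t)$ is a bijection of $\CC$ (or of the line) whenever $\mu(t) \neq 0$, and $\mu(t)$ vanishes only at the finitely many zeros of $\mu_1(t)$. So let $T_0$ be the finite set of values of $t$ where $\mu(t)$ is $0$ or undefined; removing the rulings over $T_0$ discards only a $1$-dimensional subset of $S$. Away from those rulings, I claim the fiber of $\widehat{\bfx}$ over a generic point $P \in S$ is in bijective correspondence with the fiber of $\bfx$ over $P$: indeed if $\widehat{\bfx}(t_1,s_1) = \widehat{\bfx}(t_2,s_2) = P$, then setting $\sigma_i = s_i\mu(t_i)$ gives $\bfx(t_1,\sigma_1) = \bfx(t_2,\sigma_2) = P$, and conversely; since $\mu(t_i) \neq 0$ the passage $(t_i,s_i) \leftrightarrow (t_i,\sigma_i)$ is one-to-one.

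The key step is therefore: since $\bfx$ is proper, for a generic $P \in S$ the fiber $\bfx^{-1}(P)$ is a single point $(t,\sigma)$; by the correspondence above, $\widehat{\bfx}^{-1}(P)$ is then also a single point $(t, \sigma/\mu(t))$, so $\widehat{\bfx}$ is injective on the complement of a $1$-dimensional subset, i.e. proper. One has to be a little careful about the word ``generic'': properness of $\bfx$ means the fiber is a single point outside some proper closed subset $B \subset S$; I would take the generic point $P$ to lie outside $B$ \emph{and} outside the (finitely many) rulings over $T_0$, and also outside the rulings over the base points of $t \mapsto \bfp(t), \bfq(t)$ if one wishes to argue via the rational inverse. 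The union of all these excluded sets is still only $1$-dimensional, so genericity is preserved.

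I expect the main obstacle to be purely bookkeeping rather than conceptual: making precise that ``the ruling direction at $\widehat{\bfx}(t,s)$ still determines $t$'' and that no new self-intersections are created when two different parameters $t_1 \neq t_2$ happen to give the same point — but this cannot happen generically precisely because it did not happen for $\bfx$, and the rescaling only affects the $s$-coordinate fiberwise. An alternative, slicker route would be to exhibit the rational inverse of $\widehat{\bfx}$ explicitly: given $\bfx^{-1}(X) = (T(X), \Sigma(X))$, the inverse of $\widehat{\bfx}$ is $(T(X), \Sigma(X)/\mu(T(X)))$, which is a well-defined rational map on $S$ (its denominator $\mu_2(T(X))\mu_1(T(X))$ does not vanish identically on $S$ since $S$ is not the union of those finitely many rulings), and a rational parametrization admitting a rational inverse is proper. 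I would present whichever of these two arguments the authors' earlier setup makes cleanest, probably the explicit-inverse one since properness has just been defined in terms of $\bfx^{-1}$ existing and being rational.
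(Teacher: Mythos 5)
Your argument is correct and is essentially the paper's own proof: both rely on the fiberwise correspondence $(t,s)\leftrightarrow(t,s\mu(t))$ between the parameters of $\widehat{\bfx}$ and $\bfx$ at a generic point where $\mu(t)\neq 0$, and then invoke properness of $\bfx$ (the paper phrases this as a proof by contradiction, you phrase it directly, but the content is identical). The alternative explicit-inverse route you sketch is fine too, but unnecessary given how short the fiber argument is.
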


\begin{proof} Suppose that $\widehat{\pmb{x}}(t,s)$ is not proper. Then a generic point $P$ of $S$ is generated via $\widehat{\pmb{x}}(t,s)$ by two different pairs $(t_1,s_1)\neq(t_2,s_2)$; furthermore, since $P$ is generic we can assume that $\mu(t_1)\cdot \mu(t_2)\neq 0$. In this situation, $P$ is generated via $\bfx(t,s)$ by the pairs $(t_1,\tilde{s}_1)$, $(t_2,\tilde{s}_2)$ where $\tilde{s}_i=\mu(t_i)\cdot s_i$, $i=1,2$. Since $\bfx(t,s)$ is proper by hypothesis, we deduce that $t_1=t_2$, $\tilde{s}_1=\tilde{s}_2$, in which case $\mu(t_1)=\mu(t_2)$ too. And since $\tilde{s}_1=\tilde{s}_2$ and $\mu(t_i)\neq 0$, we conclude that $s_1=s_2$, contradicting that the pairs $(t_1,s_1)$ and $(t_2,s_2)$ were different.
\end{proof}

An \emph{isometry} of $\RR^3$ is a map $f:\RR^3\longrightarrow \RR^3$ preserving Euclidean distances. Any isometry $f$ of $\RR^3$ has the form \begin{equation}\label{eq:isometry}
f({\bf x}) = \bfQ {\bf x} + \bfb, \qquad {\bf x}\in \RR^3,
\end{equation}
with $\bfb \in \RR^3$ and $\bfQ\in \RR^{3\times 3}$ an orthogonal matrix, i.e. $\bfQ^T \bfQ=I$, where $I$ denotes the $3\times3$ identity matrix. In particular, $\det(\bfQ) = \pm 1$. We say that $f$ is a \emph{symmetry} of a surface $S$ if $f$ is an isometry such that $f(S)=S$. The space of isometries of $ \RR^3$ forms a group under composition that is generated by \emph{reflections}, i.e., symmetries with respect to a plane. The nontrivial isometries of $\RR^3$ are reflections in a plane, rotations about an axis, translations, and their compositions. Composing three reflections in mutually perpendicular planes through an affine point $P$ yields a \emph{central symmetry} with center $P$, i.e., a symmetry with respect to the point $P$, called the {\it symmetry center}. 

Additionally, we say that $S$ has \emph{rotational symmetry} if there exist a line $\ell$ and a real $\theta\in (0,2\pi)$ such that $S$ is invariant under the rotation about $\ell$, by the angle $\theta$. In that case we say that $\ell$ is an \emph{axis of rotation} of $S$. The special case of rotation by the angle $\pi$ is called a \emph{half-turn} or an \emph{axial symmetry}. The axis of rotation in this case is called the {\it symmetry axis}. 

Finally, we say that $f:\RR^n\longrightarrow \RR^n$ is an \emph{involution}, if $f\circ f=\mbox{id}_{{\Bbb R}^n}$. For $n=3$, reflections in a plane, axial symmetries and central symmetries are involutions. 


\section{Symmetries of ruled surfaces.}\label{sec-symmetries}

Let $S$ be a rational ruled surface parametrized as in Eq. \eqref{surf}, in the conditions of Section \ref{gen-surf}, i.e. $\bfx(t,s)$ is proper, $\bfq(t)$ is polynomial and has relatively prime components, and $S$ is neither cylindrical, nor doubly-ruled, nor a surface of revolution.  

\begin{theorem} \label{th-fundam}
 Let $S$ be a rational real ruled surface properly parametrized as in Eq. \eqref{surf}. An isometry $f:{\Bbb R}^3\to {\Bbb R}^3$, $f({\bf x})=\bfQ {\bf x}+\bfb$ is a symmetry of $S$ if and only if there exists a unique, birational transformation $\varphi:{\Bbb R}^2\to {\Bbb R}^2$, such that the diagram 
	\begin{equation}\label{eq:fundamentaldiagram}
	\xymatrix{
		S \ar[r]^{f} & S \\
		\RR^2 \ar@{-->}[u]^{\pmb{x}} \ar@{-->}[r]_{\varphi} & \RR^2 \ar@{-->}[u]_{\pmb{x}}
		}
	\end{equation}
	is commutative. In particular, for a generic point $(t,s)\in \RR^2$ we have 
	\begin{equation}\label{fundam-eq}  f\circ \bfx=\bfx\circ \varphi.
	\end{equation}
\end{theorem}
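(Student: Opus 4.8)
The plan is to argue both implications through the uniqueness of the inverse parametrization. Since $\bfx(t,s)$ is proper, it has a rational inverse $\bfx^{-1}:S\dashrightarrow\RR^2$, defined off a proper subvariety of $S$.

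For the "only if" direction, assume $f$ is a symmetry, so $f(S)=S$ and $f$ is a bijection $S\to S$ (being an isometry of $\RR^3$ restricted to $S$). First I would set $\varphi:=\bfx^{-1}\circ f\circ\bfx$. This is a composition of rational maps, hence rational; it is defined on a dense open subset of $\RR^2$ because $\bfx$, $f$ and $\bfx^{-1}$ are each generically defined and $f$ maps the image of a generic point to the image of a generic point (here one uses that $f$ is a bijection of $S$, so it does not collapse the generic locus into the bad locus of $\bfx^{-1}$). Its inverse is $\bfx^{-1}\circ f^{-1}\circ\bfx$, which is again rational, so $\varphi$ is birational. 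By construction $\bfx\circ\varphi=\bfx\circ\bfx^{-1}\circ f\circ\bfx=f\circ\bfx$ on the generic locus, which is exactly Eq.~\eqref{fundam-eq} and the commutativity of the diagram.

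For the "if" direction, assume such a birational $\varphi$ exists with $f\circ\bfx=\bfx\circ\varphi$ generically. Then $f(\bfx(\RR^2))=\bfx(\varphi(\RR^2))$; since $\varphi$ is birational, $\varphi(\RR^2)$ is dense in $\RR^2$, so $\bfx(\varphi(\RR^2))$ is dense in $S$, hence $f(S)\subseteq\overline{f(\bfx(\RR^2))}=\overline{\bfx(\varphi(\RR^2))}=S$ by continuity of $f$ and the fact that $S$ is closed; applying the same argument to $f^{-1}$ (using $\varphi^{-1}$) gives $f^{-1}(S)\subseteq S$, whence $f(S)=S$, i.e. $f$ is a symmetry of $S$.

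Finally, uniqueness of $\varphi$: if $\varphi_1,\varphi_2$ both make the diagram commute, then $\bfx\circ\varphi_1=\bfx\circ\varphi_2$ generically, and composing on the left with $\bfx^{-1}$ (valid on the generic locus, where $\bfx$ is injective by properness) yields $\varphi_1=\varphi_2$ as rational maps.

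The main obstacle I expect is the bookkeeping of exceptional sets: one must check that the "generic point" at which each composition is defined can be taken uniformly, i.e. that the finitely many proper subvarieties where $\bfx$, $\bfx^{-1}$, $f$ (trivial, $f$ is defined everywhere), $\varphi$ and $\varphi^{-1}$ misbehave do not conspire to make the relevant open set empty. This is routine over $\CC$ (or for the Zariski-dense real locus), since a finite union of proper subvarieties of $\RR^2$ or $S$ cannot be everything, but it should be stated carefully; everything else is formal manipulation of rational maps together with the hypothesis that $\bfx$ is proper.
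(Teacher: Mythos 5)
Your proof is correct, and its core coincides with the paper's: in the forward direction both you and the paper set $\varphi=\bfx^{-1}\circ f\circ\bfx$, use properness of $\bfx$ to make sense of $\bfx^{-1}$, and obtain birationality of $\varphi$ as a composition of birational maps, with commutativity automatic. The difference lies in how the converse is closed. The paper deduces $f(S)\subseteq S$ from Eq.~\eqref{fundam-eq} exactly as you do, but then finishes with an irreducibility argument: since $f$ is an isometry, $f(S)$ is again a rational surface (it does not degenerate into a curve), both $f(S)$ and $S$ are irreducible, and an irreducible surface contained in the irreducible surface $S$ must equal $S$. You instead exploit invertibility of $f$ together with birationality of $\varphi$, repeating the containment argument for $f^{-1}$ and $\varphi^{-1}$ to get the reverse inclusion; this is equally valid and sidesteps the appeal to irreducibility and non-degeneration, at the modest cost of checking that $f^{-1}\circ\bfx=\bfx\circ\varphi^{-1}$ holds generically (which follows by composing Eq.~\eqref{fundam-eq} with $f^{-1}$ on the left and $\varphi^{-1}$ on the right). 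You also prove the uniqueness of $\varphi$ explicitly, by cancelling $\bfx$ through $\bfx^{-1}$ on the generic locus; the paper asserts uniqueness in the statement but leaves this step implicit, only noting later that distinct symmetries correspond to distinct $\varphi$. Finally, your caveat about exceptional sets is precisely the phenomenon the paper illustrates in Remark~\ref{rem-basepoints} (points missed by $\bfx$, base points in the projective closure), and restricting to a generic $(t,s)$, as both you and the theorem's wording do, is the standard and adequate way to handle it.
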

																																									
\begin{proof}
``$\Rightarrow$" Since $\bfx$ is proper by hypothesis, $\bfx^{-1}$ exists and is rational. Therefore, $\varphi=\bfx^{-1}\circ f \circ \bfx$ is birational, because it is the composition of birational transformations. ``$\Leftarrow$" Since $f\circ \pmb{x}=\pmb{x}\circ \varphi$, whenever $\bfx(t,s)$ and $(\bfx\circ \varphi)(t,s)$ are well-defined $(f\circ \bfx)(t,s)\in S$, so $f(S)\subset S$. Since $f$ is an isometry, $f(S)$ defines a rational surface, i.e. $f(S)$ does not degenerate into a curve. Additionally both $f(S),S$ are rational, and therefore irreducible; since $f(S)\subset S$ and $f(S),S$ are irreducible, $f(S)=S$, i.e. $f$ leaves $S$ invariant.
\end{proof}

\begin{remark}\label{rem-basepoints} It can happen that only one of the sides of Eq. \eqref{fundam-eq} is defined. For instance, let $S$ be the conical surface parametrized by $$\bfx(t,s)=s\cdotp(-t^4-6t^2+3,8t^3,(t^2+1)^2).$$ One can check that this surface is invariant by a rotation of $\frac{2\pi}{3}$ degrees about the $z$-axis, given by $f(x,y,z)=\bfQ\cdotp (x,y,z),$ where $$Q=\left( \begin{array}{rcl} -\frac{1}{2} & -\frac{\sqrt{3}}{2} & 0 \\ \frac{\sqrt{3}}{2} & -\frac{1}{2} & 0 \\ 0 & 0 & 1 \end{array}\right).$$ Furthermore, one can also check that in this case $f\circ \bfx=\bfx\circ \varphi$, with
\begin{equation}\label{this-var}
\varphi(t,s)= \left(\frac{-\sqrt{3}t - 3}{3t - \sqrt{3}} , \frac{9}{16}s\left(t - \frac{\sqrt{3}}{3}\right)^4 \right),
\end{equation}
holds for a generic point $(t,s)$ of the parameter space. Now take $(t_0,s_0)=\left(\frac{\sqrt{3}}{3},1\right)$, so $\bfx(t_0,s_0)=\left(\frac{8}{9},\frac{8\sqrt{3}}{9},\frac{16}{9}\right)$ and \[(f\circ \bfx)(t_0,s_0)=f\left(\frac{8}{9},\frac{8\sqrt{3}}{9},\frac{16}{9}\right)=\left(-\frac{16}{9},0,\frac{16}{9}\right).\]In particular, the left hand-side of Eq. \eqref{fundam-eq} is well defined. However, for $(t_0,s_0)=(\frac{\sqrt{3}}{3},1)$ the denominator of the first component of $\varphi(t_0,s_0)$ is zero, and $({\bfx}\circ \varphi)(t_0,s_0)$ is not defined. One can check that in this case, the symmetry $f$ maps the point $(\frac{8}{9},\frac{8\sqrt{3}}{9},\frac{16}{9})$, generated by $(t_0,s_0)$, to the point $(-\frac{16}{9},0,\frac{16}{9})$, which is a point of $S$ missed by the parametrization $\bfx$, i.e. not generated by $\bfx$ for any pair $(t,s)$.

If we consider the situation in a projective setup, we observe that Eq. \eqref{fundam-eq} fails at the projective point corresponding to $(t_0,s_0)=(\frac{\sqrt{3}}{3},1)$. Indeed, let us represent by $[t:s:\omega]$ the elements of the parameter space, which is now ${\Bbb P}^2({\Bbb R})$ (the last coordinate corresponds to the homogenization variable). Then the point $[\frac{\sqrt{3}}{3}:1:1]$ of the parameter space, corresponding to the affine point $(t_0,s_0)=(\frac{\sqrt{3}}{3},1)$, maps to $[576:0:0]$, which is a base point of 
\[
\widehat{\bfx}(t,s,\omega)=[s(-t^4-6t^2\omega^2+3\omega^4):8st^3\omega:s(t^2+\omega^2)^2:\omega^5],
\]
the parametrization of the projective closure $\widehat{S}$ of $S$. Thus, the left hand-side of Eq. \eqref{fundam-eq} is $[-\frac{16}{9}:0:\frac{16}{9}:1]$, while the right hand-side of Eq. \eqref{fundam-eq} is $[0:0:0:0]$.  
\end{remark}

Our strategy in order to find the symmetries of $S$ is to first find the transformations $\varphi$ in the bottom part of the diagram in Eq. \eqref{eq:fundamentaldiagram}, and then, from here, compute the symmetries themselves. Moreover, from Eq. \eqref{fundam-eq} one can easily see that each symmetry of $f$ is associated with a different $\varphi$. From Theorem \ref{th-fundam} we observe that the $\varphi$ are birational transformations of the plane. Such transformations are called \emph{Cremona transformations}. However, unlike the birational transformations of the line, which are the well-known 
\emph{M\"obius transformations}, i.e. the transformations of the type
\begin{equation}\label{eq:Moebius}
\psi: {\Bbb R} \dashrightarrow {\Bbb R}, \qquad \psi(t) = \frac{\alpha t + \beta}{\gamma t + \delta}, \qquad \alpha \delta - \beta \gamma \neq 0,
\end{equation}
Cremona transformations do not have a generic closed form. Because of this, in order to find out how $\varphi$ looks like, we need to make use of the properties of the surface we are investigating, in this case of the fact that $S$ is ruled. 

The following result provides a first clue in this direction.

\begin{proposition}\label{fund}
Let $S$ be a rational ruled surface properly parametrized as in Eq. \eqref{surf}, which is not doubly ruled. Let $f({\bf x})=\bfQ {\bf x}+\bfb$ be a symmetry of $S$, and let $\varphi:\RR^2\to \RR^2$ be the birational transformation making the diagram in Eq. \eqref{eq:fundamentaldiagram} commutative. Then
	\begin{equation}\label{phi-funct}
	\varphi(t,s)=(\psi(t),a(t)\cdotp s+ c(t)),
	\end{equation}
	where $\psi(t)$ is a M\"obius transformation and $a(t),c(t)$ are rational functions.
\end{proposition}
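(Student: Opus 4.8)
The plan is to exploit the fact that the birational map $\varphi$ must carry the ruling structure of $S$ to itself. A ruling $L_{t_0}$ of $S$ is the image under $\bfx$ of a vertical line $\{t=t_0\}$ in the parameter plane, and since $f$ is a symmetry, $f(L_{t_0})$ is again a line contained in $S$. Because $S$ is not doubly-ruled, $f(L_{t_0})$ must be a ruling of $S$, i.e. $f(L_{t_0})=L_{t_1}$ for some $t_1$ depending on $t_0$. From the commutativity $f\circ\bfx=\bfx\circ\varphi$ on a dense open set, this forces $\varphi$ to send (the generic point of) $\{t=t_0\}$ into $\{t=t_1\}$; hence the first component of $\varphi$ depends only on $t$, say $\varphi(t,s)=(\psi(t),\chi(t,s))$ for rational functions $\psi,\chi$.

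Next I would argue that $\psi$ is a M\"obius transformation. The assignment $t_0\mapsto t_1$ is the map on the "base" induced by $\varphi$; since $\varphi$ is birational and $f$ is invertible, this induced map $\psi$ on the parameter $t$ must itself be birational from $\RR$ (or $\mathbb{P}^1$) to itself — one can see this because $f^{-1}$ is also a symmetry and yields the inverse correspondence on rulings. A birational self-map of the line is exactly a M\"obius transformation, Eq. \eqref{eq:Moebius}, so $\psi(t)=\frac{\alpha t+\beta}{\gamma t+\delta}$ with $\alpha\delta-\beta\gamma\neq 0$.

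Finally I would pin down the dependence of $\chi$ on $s$. For fixed generic $t$, the map $s\mapsto \bfx(t,s)=\bfp(t)+s\bfq(t)$ is an affine parametrization of the ruling $L_t$, and $f$ restricted to $L_t$ is the restriction of the affine isometry $\bfx\mapsto \bfQ\bfx+\bfb$ to a line, hence an affine bijection $L_t\to L_{\psi(t)}$. Composing the affine parametrization of $L_t$, then $f$, then the inverse of the affine parametrization of $L_{\psi(t)}$, gives an affine map in $s$; concretely, matching $\bfQ(\bfp(t)+s\bfq(t))+\bfb = \bfp(\psi(t)) + \chi(t,s)\,\bfq(\psi(t))$ and using that $\bfQ\bfq(t)$ is parallel to $\bfq(\psi(t))$ (both direct the ruling $L_{\psi(t)}$), one reads off $\chi(t,s)=a(t)s+c(t)$ where $a(t)$ is the ratio of the parallel vectors $\bfQ\bfq(t)$ and $\bfq(\psi(t))$ and $c(t)$ comes from the $\bfQ\bfp(t)+\bfb-\bfp(\psi(t))$ term. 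Rationality of $a,c$ follows since $\psi,\bfp,\bfq,\bfQ$ are all rational/constant. This yields Eq. \eqref{phi-funct}.

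The main obstacle is the first step: making rigorous the claim that $\varphi$ genuinely respects the fibration by vertical lines, rather than just mapping rulings to rulings "up to the indeterminacy locus." One must be careful that $f$ could in principle send a ruling to a line in $S$ that is not a ruling — this is exactly where the non-doubly-ruled hypothesis is essential — and that the correspondence $t_0\mapsto t_1$ is well-defined and birational despite base points of $\bfx$ and $\varphi$ (cf. Remark \ref{rem-basepoints}). I would handle this by working on a dense open subset of $S$ where $\bfx$, $\bfx^{-1}$, $f$ and $\varphi$ are all regular, noting that a generic ruling meets this open set in a dense open subset of the line, so its image determines $t_1$ unambiguously; properness of $\bfx$ then guarantees the preimage of $L_{t_1}$ is generically the single vertical line $\{t=t_0\}$.
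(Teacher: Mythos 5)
Your proposal is correct and follows essentially the same route as the paper: use the non-doubly-ruled hypothesis to see that $f$ maps rulings to rulings, hence the first component of $\varphi$ depends only on $t$ and, being birational, is a M\"obius transformation; then read off the affine-in-$s$ form of the second component from $f\circ\bfx=\bfx\circ\varphi$ with $f(\mathbf{x})=\bfQ\mathbf{x}+\bfb$. Your write-up actually makes explicit two points the paper leaves terse (why $\bfQ\bfq(t)$ is parallel to $\bfq(\psi(t))$, and the genericity/properness bookkeeping), which is a welcome addition rather than a deviation.
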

 
\begin{proof} Since $f$ is a symmetry of $S$, $f$ maps rulings of $S$ to rulings of $S$. Let $\varphi(t,s)=(\varphi_1(t,s),\varphi_2(t,s))$. A generic ruling of $S$ is defined by 
$\pmb{x}(t_a,s)$, where $t_a$ is constant. Since $S$ is not doubly ruled, $\pmb{x}(t_a,s)$ is mapped to $\pmb{x}(t_b,s)$, where $t_b$ is also a constant. Using Eq. \eqref{fundam-eq}, we get 
$$f(\bfx(t_a,s))=\bfx(\varphi(t_a,s))=\bfx(\varphi_1(t_a,s),\varphi_2(t_a,s))=\bfx(t_b,s),$$so $\varphi_1(t_a,s)=t_b$, i.e. $\varphi_1(t_a,s)$ does not depend on $s$. Since this happens for a generic $t_a$, we deduce that $\varphi_1(t,s)=\varphi_1(t)$. Since $\varphi$ is birational, $\varphi_1$ is birational as well; in particular, we deduce that $\varphi_1$ is a birational transformation of the line, so $\varphi_1$ must be a M\"obius transformation, that we represent by $\psi(t)$. The rest of the theorem follows from Eq. \eqref{fundam-eq}, taking into account that $f({\bf x})=\bfQ {\bf x}+\bfb$. 
\end{proof}

Let us now investigate the structure of the function $a(t)$ in Eq. \eqref{phi-funct}. In order to do this, recall that $\pmb{x}(t,s)=\bfp(t)+s\cdotp\pmb{q}(t)$, where $\bfq(t)=(q_1(t),q_2(t),q_3(t))$, each $q_i(t)$ is polynomial and $\gcd(q_1,q_2,q_3)=1$. Also, let 

\begin{equation}\label{eq-n}
n=\mbox{max}\{\deg(q_1(t)),\deg(q_2(t)),\deg(q_3(t))\},
\end{equation}

\noindent and let us write \[a(t)=\dfrac{A(t)}{B(t)},\mbox{ }\psi(t) = \dfrac{\alpha t + \beta}{\gamma t + \delta},\]where $A,B\in\RR[t]$, $\gcd(A,B)=1$, and $\alpha \delta - \beta \gamma \neq 0$.  Now combining Eq. \eqref{phi-funct} and Eq. \eqref{fundam-eq} with $f({\bf x})=\bfQ {\bf x}+\bfb$, we get
\begin{equation}\label{at}
		\bfQ\cdotp\bfq(t)=a(t)\cdotp\bfq(\psi(t)).
	\end{equation}
Since $\bfq(t)$ is polynomial the left hand-side of Eq. \eqref{at} is polynomial, so the right hand-side of Eq. \eqref{at} must be polynomial as well. This yields the following results; here, we denote the entries of the matrix $\bfQ$ by $\bfQ_{ij}$. 

\begin{lemma}\label{At}
	$(\gamma t + \delta)^n$ divides $A(t)$.
\end{lemma}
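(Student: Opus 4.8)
The plan is to substitute the explicit expressions $a(t)=A(t)/B(t)$ and $\psi(t)=(\alpha t+\beta)/(\gamma t+\delta)$ into Eq.~\eqref{at}, clear denominators componentwise, and then keep track of the exact power of the linear polynomial $\gamma t+\delta$ appearing on each side. If $\gamma=0$ there is nothing to prove, since then $(\gamma t+\delta)^n=\delta^n$ is a nonzero constant (note that $\alpha\delta-\beta\gamma=\alpha\delta\neq 0$ forces $\delta\neq0$); so I would assume $\gamma\neq0$ and set $t_0=-\delta/\gamma$, the only root of $\gamma t+\delta$.

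For each $i\in\{1,2,3\}$, writing $q_i(u)=\sum_k c_{ik}u^k$ one has
$$\widehat{q}_i(t):=(\gamma t+\delta)^n\, q_i(\psi(t))=\sum_{k=0}^{\deg q_i} c_{ik}\,(\alpha t+\beta)^k(\gamma t+\delta)^{\,n-k},$$
which is a genuine polynomial of degree at most $n$. The summand with $k=\deg q_i$ carries the factor $(\gamma t+\delta)^{\,n-\deg q_i}$ while every other summand carries a strictly higher power of $\gamma t+\delta$, so $\widehat{q}_i(t)=(\gamma t+\delta)^{\,n-\deg q_i}\,g_i(t)$ with $g_i(t_0)=c_{i,\deg q_i}(\alpha t_0+\beta)^{\deg q_i}$. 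Since $c_{i,\deg q_i}\neq0$ and $\alpha t_0+\beta=(\beta\gamma-\alpha\delta)/\gamma\neq0$, we get $g_i(t_0)\neq0$; hence $t_0$ is a root of $\widehat{q}_i$ of multiplicity exactly $n-\deg q_i$.

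Now the $i$-th component of Eq.~\eqref{at}, after clearing denominators, reads
$$(\gamma t+\delta)^n\, B(t)\,\bigl(\bfQ\cdot\bfq(t)\bigr)_i=A(t)\,\widehat{q}_i(t),$$
and the left-hand side is a polynomial because $\bfQ$ is a constant matrix. Thus $(\gamma t+\delta)^n$ divides $A(t)\,\widehat{q}_i(t)$; cancelling the factor $(\gamma t+\delta)^{\,n-\deg q_i}$ contributed by $\widehat{q}_i$ and using that $\gamma t+\delta$ is irreducible and coprime to $g_i$, one obtains $(\gamma t+\delta)^{\deg q_i}\mid A(t)$. Choosing the index $i$ with $\deg q_i=n$, which exists by the definition \eqref{eq-n} of $n$, gives $(\gamma t+\delta)^n\mid A(t)$, as claimed.

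The whole argument is bookkeeping of orders of vanishing at $t_0$; the one step deserving care is the exact computation of that order for $\widehat{q}_i$ (geometrically: as $t\to t_0$ one has $\psi(t)\to\infty$, so $\bfq(\psi(t))$ blows up like $(\gamma t+\delta)^{-n}$ times the vector of leading coefficients of $\bfq$, and $a(t)$ must vanish to order $n$ at $t_0$ to keep the polynomial vector $\bfQ\cdot\bfq(t)$ finite), together with the need to read the full exponent $n$ off the component of maximal degree. I note that this argument uses neither the properness of $\bfx$ nor the coprimality of $q_1,q_2,q_3$.
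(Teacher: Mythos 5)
Your proof is correct and follows essentially the same route as the paper: both arguments clear the denominator $(\gamma t+\delta)^{\deg q_i}$ (or uniformly $(\gamma t+\delta)^n$) in the $i$-th component of Eq.~\eqref{at}, use $\alpha\delta-\beta\gamma\neq 0$ to see that the resulting numerator is not divisible by $\gamma t+\delta$, and then read off the divisibility from a component with $\deg q_i=n$. Your explicit treatment of the trivial case $\gamma=0$ and the exact order-of-vanishing bookkeeping at $t_0=-\delta/\gamma$ are only minor presentational refinements of the paper's argument.
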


\begin{proof}
	From Eq. \eqref{at}, for $i=1,2,3$ we get 
	\begin{equation}\label{est}
	\bfQ_{i1}\cdotp q_1(t)+\bfQ_{i2}\cdotp q_2(t)+\bfQ_{i3}\cdotp q_3(t)=a(t)\cdotp q_i(\psi(t)),
	\end{equation} where $q_i(t)=a_{\ell_i} t^{\ell_i}+a_{\ell_i-1}t^{\ell_i-1}+\cdots+a_0,$ with $\ell_i\leq n$ for $i\in \{1,2,3\}$. Furthermore, $\ell_i=n$ for at least one $i\in \{1,2,3\}$. Additionally, $$q_i(\psi(t))=\dfrac{a_{\ell_i}(\alpha t+\beta)^{\ell_i}+a_{\ell_i-1}(\alpha t+\beta)^{\ell_i-1}(\gamma t+\delta)+\cdots+a_0(\gamma t+\delta)^{\ell_i}}{(\gamma t+\delta)^{\ell_i}}.$$ Since $\gamma t +\delta$ does not divide $\alpha t+\beta$, the numerator and denominator of $q_i(\psi(t))$ are relatively prime. Since the left hand-side of Eq. \eqref{est} is polynomial, $a(t)\cdotp q_i(\psi(t))$ must be polynomial as well, so $(\gamma t+\beta)^{\ell_i}$ divides $A(t)$. Since $\ell_i=n$ for some $i\in \{1,2,3\}$, the statement follows. 
\end{proof}

\begin{lemma}\label{Bt}
	$B(t)$ is a constant.
\end{lemma}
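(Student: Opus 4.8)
The plan is to exploit the relation $\bfQ\cdot\bfq(t) = a(t)\cdot\bfq(\psi(t))$ from Eq. \eqref{at}, together with the fact that $\bfQ$ is orthogonal. First I would rewrite this as $\bfq(t) = a(t)\cdot\bfQ^{-1}\bfq(\psi(t)) = a(t)\cdot\bfQ^T\bfq(\psi(t))$, using $\bfQ^T\bfQ = I$. This is exactly the same type of equation but with $\bfQ$ replaced by $\bfQ^T$ (still orthogonal) and with the roles of $t$ and $\psi(t)$ swapped: if I set $\tilde{t} = \psi(t)$, then $t = \psi^{-1}(\tilde{t})$ is again a M\"obius transformation, say $\psi^{-1}(\tilde t) = \frac{\tilde\alpha \tilde t + \tilde\beta}{\tilde\gamma \tilde t + \tilde\delta}$, and we obtain $\bfq(\psi^{-1}(\tilde t)) = \frac{1}{a(\psi^{-1}(\tilde t))}\cdot\bfQ^T\bfq(\tilde t)$, i.e. an identity of the same shape as Eq. \eqref{at} with multiplier $\frac{1}{a(\psi^{-1}(\tilde t))}$.

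Next I would apply Lemma \ref{At} in this symmetric setting. Writing $\frac{1}{a(\psi^{-1}(\tilde t))} = \frac{\tilde A(\tilde t)}{\tilde B(\tilde t)}$ in lowest terms, Lemma \ref{At} gives that $(\tilde\gamma \tilde t + \tilde\delta)^n$ divides $\tilde A(\tilde t)$. The key observation is then a degree count: from $a(t) = A(t)/B(t)$ with $\gcd(A,B)=1$ and the fact (Lemma \ref{At}) that $(\gamma t+\delta)^n \mid A(t)$, substituting $t = \psi^{-1}(\tilde t)$ and clearing the denominators $(\tilde\gamma\tilde t+\tilde\delta)$ that appear, one sees that the numerator of $\frac{1}{a(\psi^{-1}(\tilde t))}$, i.e. $\tilde A(\tilde t)$, is (up to a nonzero constant and removing common factors) the polynomial obtained from $B(\psi^{-1}(\tilde t))$ after clearing denominators, while the new denominator $\tilde B(\tilde t)$ comes from $A(\psi^{-1}(\tilde t))$ after clearing denominators. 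Combining $(\gamma t + \delta)^n \mid A(t)$ with the factor $(\tilde\gamma\tilde t+\tilde\delta)^n \mid \tilde A(\tilde t)$ forces $\deg B \geq n$ on one hand and $\deg B \leq 0$ on the other — more precisely, $\deg A \le \deg B + $ (something controlled by $n$), and the two divisibility statements together pin down $\deg B = 0$. Alternatively, and perhaps more cleanly, I would argue directly: from Eq. \eqref{est}, taking $i$ with $\ell_i = n$, the right side is $a(t)q_i(\psi(t))$ whose denominator (before cancellation) is $B(t)(\gamma t+\delta)^n$; for this to be a polynomial of degree $\le n$ (which the left side is), and since $(\gamma t+\delta)^n$ already divides $A(t)$ by Lemma \ref{At}, the factor $B(t)$ in the denominator must cancel against the numerator; doing this for all three $i$ simultaneously, and using $\gcd(q_1,q_2,q_3)=1$, shows $B(t)$ must divide $1$, hence is constant.

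The main obstacle I anticipate is bookkeeping the degrees and the cancellations carefully: one must ensure that the factor $B(t)$ cannot hide inside the $q_i(\psi(t))$ numerators for every $i$ at once. This is where $\gcd(q_1,q_2,q_3)=1$ is essential — if $B(t)$ divided the numerator of $q_i(\psi(t))$ for each $i$, one could pull out a common factor contradicting the gcd condition (after accounting for the $(\gamma t + \delta)$ powers). A clean way to package this is to clear all denominators in Eq. \eqref{at} to get $B(t)(\gamma t+\delta)^n \bfQ\bfq(t) = A(t)\cdot(\gamma t+\delta)^n\bfq(\psi(t))/(\gamma t+\delta)^{\ell_i}$-type expressions, observe that $B(t)$ divides the left side, hence divides $A(t)$ times a polynomial vector whose components have trivial common gcd; since $\gcd(A,B)=1$, $B(t)$ divides that gcd, so $B(t)$ is constant.
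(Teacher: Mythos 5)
Your ``cleaner'' alternative is essentially the paper's own proof: polynomiality of the left-hand side of Eq.~\eqref{est} together with $\gcd(A,B)=1$ forces $B\mid N_i$ for $i=1,2,3$, and a nonconstant common factor of the $N_i$ would yield a common root $t_0$ which cannot satisfy $\gamma t_0+\delta=0$ (since $N_i(-\delta/\gamma)=a_{\ell_i}(\alpha(-\delta/\gamma)+\beta)^{\ell_i}\neq 0$), so $\psi(t_0)$ would be a common root of $q_1,q_2,q_3$, contradicting $\gcd(q_1,q_2,q_3)=1$ --- exactly the paper's argument. The preliminary inversion-plus-degree-count route is not needed and, as sketched, too vague to stand on its own, but the argument you settle on is correct and coincides with the paper's.
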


\begin{proof}
Let $N_i(t)$ be the numerator of $q_i(\psi(t))$, and recall that $\gcd(q_1,q_2,q_3)=1$. Since the left hand-side of Eq. \eqref{est} is polynomial, $B(t)|N_i(t)$ for $i=1,2,3$. Thus, $B(t)|G(t)$, where $G=\gcd(N_1,N_2,N_3)$. Now suppose that $G(t)$ is not constant. Then $N_1,N_2,N_3$ have a common root $t_0$. Moreover, since the numerators and denominators of the $q_i(\psi(t))$ are relative prime, $\gamma t_0+\delta\neq 0$. Therefore, $\psi(t_0)$ is well defined and $\psi(t_0)$ is a common root of the $q_i(t)$, because $q_i(\psi(t_0))=\frac{N_i(t_0)}{(\gamma t_0+\delta)^{\ell_i}}$. But this contradicts that $\gcd(q_1,q_2,q_3)=1$. Thus, $G(t)$ is constant and since $B(t)|G(t)$, $B(t)$ must be a constant.
\end{proof}

Finally, we get the following proposition on the form of the function $a(t)$.

\begin{proposition} \label{prop-a(t)}
The function $a(t)$ satisfies that $a(t)=k\cdotp(\gamma t+\delta)^n$, where $k$ is a constant, and $n=\mbox{max}\{\deg(q_1(t)),\deg(q_2(t)),\deg(q_3(t))\}$.
\end{proposition}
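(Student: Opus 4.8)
The plan is to bootstrap from the two preceding lemmas and then close the argument with a single degree comparison. By Lemma~\ref{At} we may write $A(t)=(\gamma t+\delta)^n\,C(t)$ with $C\in\RR[t]$, and by Lemma~\ref{Bt} the denominator $B(t)$ is a nonzero constant; hence
\[
a(t)=(\gamma t+\delta)^n\,c(t),\qquad c(t):=\frac{C(t)}{B(t)}\in\RR[t].
\]
So the proposition reduces to showing that $c(t)$ is a \emph{nonzero constant}, i.e. that $\deg A(t)=n$ (when $\gamma\neq 0$; the case $\gamma=0$ is handled below). That $c\not\equiv 0$ is immediate: if $a\equiv 0$, then Eq.~\eqref{at} gives $\bfQ\,\bfq(t)\equiv \pmb{0}$, and since $\bfQ$ is invertible this forces $\bfq(t)\equiv\pmb{0}$, which is impossible.

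Now assume $\gamma\neq 0$ and single out a convenient component. Since $\gcd(q_1,q_2,q_3)=1$, the value $\psi(\infty)=\alpha/\gamma$ is not a common root of $q_1,q_2,q_3$, so we may fix an index $j$ with $q_j(\alpha/\gamma)\neq 0$. Exactly as in the proof of Lemma~\ref{At}, $q_j(\psi(t))=N_j(t)/(\gamma t+\delta)^{\ell_j}$ with $N_j(t)$ coprime to $\gamma t+\delta$; moreover, reading off the sum expansion of $N_j(t)$ used there, its coefficient of $t^{\ell_j}$ equals $\gamma^{\ell_j}q_j(\alpha/\gamma)\neq 0$, so $\deg N_j(t)=\ell_j$ exactly. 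Substituting $a(t)=(\gamma t+\delta)^n c(t)$ into the $j$-th equation of \eqref{est} yields
\[
\bfQ_{j1}q_1(t)+\bfQ_{j2}q_2(t)+\bfQ_{j3}q_3(t)=(\gamma t+\delta)^{\,n-\ell_j}\,c(t)\,N_j(t),
\]
which is a genuine polynomial identity since $n\geq \ell_j$. The left-hand side has degree at most $n$, and it is not identically zero (otherwise $q_j(\psi(t))\equiv 0$, hence $q_j\equiv 0$, contradicting $q_j(\alpha/\gamma)\neq 0$); the right-hand side has degree $(n-\ell_j)+\deg c+\ell_j=n+\deg c$. Comparing, $\deg c\leq 0$, so $c(t)=k$ for a nonzero constant $k$, and $a(t)=k\,(\gamma t+\delta)^n$.

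It remains to treat $\gamma=0$, where $\psi$ is affine and $(\gamma t+\delta)^n=\delta^n$ is already constant, so it suffices to show $a(t)$ is constant: choosing $j$ with $\ell_j=n$ and $q_j\not\equiv 0$, the $j$-th equation of \eqref{est} has left-hand side of degree at most $n$ and right-hand side $a(t)\,q_j(\psi(t))$ of degree $\deg a+n$, forcing $\deg a\leq 0$, and then $a(t)=k\delta^n=k(\gamma t+\delta)^n$ with $k=a/\delta^n$. I expect the delicate points here to be purely bookkeeping rather than conceptual: verifying that $\deg N_j(t)=\ell_j$ exactly (the step where $q_j(\psi(\infty))\neq 0$ is actually used), making sure the chosen instance of \eqref{est} is not the trivial identity $0=0$ so that the degree comparison is legitimate, and keeping the minor split on $\gamma$ straight. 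The one genuinely new ingredient beyond Lemmas~\ref{At} and~\ref{Bt} is the observation that $\gcd(q_1,q_2,q_3)=1$ produces a component that is nonzero at $\psi(\infty)$, and this is exactly what upgrades the degree comparison to $\deg c\leq 0$.
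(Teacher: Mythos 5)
Your proof is correct, but it follows a genuinely different route from the paper's. The paper, after writing $a(t)=k(t)(\gamma t+\delta)^n$ via Lemmas~\ref{At} and~\ref{Bt}, takes norms in Eq.~\eqref{at}: since $\bfQ$ is orthogonal, $\Vert \bfq(t)\Vert^2=k^2(t)(\gamma t+\delta)^{2n}\Vert \bfq(\psi(t))\Vert^2$, and comparing the degree $2n$ of the left-hand side with the degree of the right-hand side (the numerator of $\Vert\bfq(\psi(t))\Vert^2$ has degree $2n$ over its denominator $(\gamma t+\delta)^{2n}$, with no cancellation because a sum of squares of real polynomials cannot drop its leading term) forces $\deg k^2=0$. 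You instead stay with a single, well-chosen component of Eq.~\eqref{est}: the hypothesis $\gcd(q_1,q_2,q_3)=1$ guarantees an index $j$ with $q_j(\alpha/\gamma)\neq 0$, which pins down $\deg N_j=\ell_j$ exactly and turns the identity into a clean degree count $n\geq n+\deg c$, with a separate (easy) treatment of $\gamma=0$. What your version buys: it uses only the invertibility of $\bfQ$ (to rule out $a\equiv 0$) rather than orthogonality, and it does not rely on the real sum-of-squares positivity, so it would survive verbatim for arbitrary nonsingular $\bfQ$ and over $\CC$. What the paper's version buys: brevity --- taking norms collapses the three components into one equation and avoids both the case split on $\gamma$ and the need to select a component where the leading coefficient is guaranteed not to vanish. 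All the delicate points you flagged (exact degree of $N_j$, nontriviality of the chosen equation, the $\gamma=0$ branch) are handled correctly in your write-up.
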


\begin{proof}From the two previous lemmas we have $a(t)=k(t)\cdotp(\gamma t+\delta)^n$ for some polynomial $k(t)$. Additionally, from Eq. \eqref{at}
\begin{equation}\label{eq}
	\bfQ\cdotp\bfq(t)=k(t)\cdotp(\gamma t+\delta)^n\cdotp\bfq(\psi(t)).
	\end{equation}
Since $\bfQ$ is an orthogonal transformation, taking norms we get 
\begin{equation}\label{with-norms}
\Vert \bfq(t)\Vert^2=k^2(t)\cdot (\gamma t+\delta)^{2n}\cdot \Vert \bfq(\psi(t))\Vert^2.
\end{equation}
Since $n=\mbox{max}\{\deg(q_1(t)),\deg(q_2(t)),\deg(q_3(t))\}$ the degree in $t$ of the left hand-side of Eq. \eqref{with-norms} is $2n$. Since the denominator of $\Vert\bfq(\psi(t))\Vert^2$ is $(\gamma t+\delta)^{2n}$ and the degree in $t$ of the numerator of $\Vert\bfq(\psi(t))\Vert^2$ is $2n$ as well, we deduce that the degree in $t$ of $k^2(t)$ must be zero. Therefore, $k(t)$ must be a constant. 
	\end{proof}
	
We summarize the previous results in the following theorem. 

\begin{theorem}\label{fund2}
Let $S$ be a rational ruled surface properly parametrized as in Eq. \eqref{surf}, which is not doubly ruled, let $\bfq(t)=(q_1(t),q_2(t),q_3(t))$, with $q_i(t)\in {\Bbb R}[t]$ for $i=1,2,3$, and let also $n=\mbox{max}\{\deg(q_1(t)),\deg(q_2(t)),\deg(q_3(t))\}$. Let $f({\bf x})=\bfQ {\bf x}+\bfb$ be a symmetry of $S$, and let $\varphi:\RR^2\to \RR^2$ be the birational transformation making the diagram in Eq. \eqref{eq:fundamentaldiagram} commutative. Then
	\begin{equation}\label{phi-funct-2}
	\varphi(t,s)=(\psi(t),k\cdotp(\gamma t+\delta)^n\cdotp s+ c(t)),
	\end{equation}
	where $\psi(t)$ is a M\"obius transformation, $k$ is a constant, and $c(t)$ is a rational function.
\end{theorem}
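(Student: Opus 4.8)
The plan is to assemble Theorem \ref{fund2} directly from the structural results already in hand, since it is essentially their common summary. First I would recall Theorem \ref{th-fundam}: a symmetry $f(\bfx)=\bfQ\bfx+\bfb$ of $S$ determines a unique birational $\varphi$ making the diagram in Eq. \eqref{eq:fundamentaldiagram} commute, so that Eq. \eqref{fundam-eq} holds for a generic point of the parameter space. Then I would invoke Proposition \ref{fund}: since $S$ is ruled, not doubly ruled, and $\bfx$ is proper, the first component of $\varphi$ depends only on $t$ and is a M\"obius transformation $\psi(t)=(\alpha t+\beta)/(\gamma t+\delta)$, while the second component is affine in $s$, i.e. $\varphi(t,s)=(\psi(t),a(t)s+c(t))$ with $a,c$ rational. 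Writing $a=A/B$ in lowest terms, this already reduces the theorem to pinning down $a(t)$.

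For that, I would split Eq. \eqref{fundam-eq} into its part independent of $s$ and its part linear in $s$. The linear part gives $\bfQ\cdot\bfq(t)=a(t)\cdot\bfq(\psi(t))$, which is Eq. \eqref{at}, while the constant part gives $\bfQ\cdot\bfp(t)+\bfb=\bfp(\psi(t))+c(t)\cdot\bfq(\psi(t))$, exhibiting $c(t)$ as a rational function, so no further analysis of $c$ is needed. The real content is Proposition \ref{prop-a(t)}, whose argument I would cite verbatim: Lemma \ref{At} yields $(\gamma t+\delta)^n\mid A(t)$ by clearing the denominator $(\gamma t+\delta)^{\ell_i}$ of $q_i(\psi(t))$ and using that numerator and denominator there are coprime (because $\gamma t+\delta\nmid\alpha t+\beta$); Lemma \ref{Bt} shows $B(t)$ is constant, since any nonconstant common factor of the numerators $N_i$ would force a common root of $q_1,q_2,q_3$, contradicting $\gcd(q_1,q_2,q_3)=1$; and finally the orthogonality of $\bfQ$, via the norm identity $\Vert\bfq(t)\Vert^2=k^2(t)(\gamma t+\delta)^{2n}\Vert\bfq(\psi(t))\Vert^2$ together with a degree count in $t$, forces the remaining polynomial factor $k(t)$ to be a constant $k$.

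Combining these facts gives $a(t)=k\cdot(\gamma t+\delta)^n$, and substituting into $\varphi(t,s)=(\psi(t),a(t)s+c(t))$ produces exactly Eq. \eqref{phi-funct-2}. The only points requiring care are bookkeeping: verifying that the standing hypotheses of Section \ref{gen-surf} (properness, $\bfq$ polynomial with relatively prime components, $S$ not cylindrical, not doubly ruled, not a surface of revolution) are precisely what Propositions \ref{fund} and \ref{prop-a(t)} require, and checking that the degenerate possibilities $\gamma=0$ (so $\psi$ is affine) and $\deg(q_i)<n$ for some $i$ are already absorbed into the arguments of Lemmas \ref{At}--\ref{Bt}. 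I do not expect any substantial new obstacle here; the genuine difficulty was front-loaded into those two lemmas and into the norm/degree argument of Proposition \ref{prop-a(t)}.
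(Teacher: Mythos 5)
Your proposal is correct and follows essentially the same route as the paper, which presents Theorem \ref{fund2} precisely as a summary of Proposition \ref{fund} together with Lemmas \ref{At}--\ref{Bt} and Proposition \ref{prop-a(t)}. The additional observation that the $s$-independent part of Eq. \eqref{fundam-eq} exhibits $c(t)$ as rational is consistent with what Proposition \ref{fund} already provides, so nothing is missing.
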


Hence, we have proven that in order to find the functions $\psi(t),a(t)$ in Eq. \eqref{phi-funct} it suffices to determine five parameters, namely the parameters of the M\"obius transformation $\psi(t)$, and the constant $k$. However, we can always come down to 4 parameters, either by fixing one of the parameters of $\psi(t)$ as 1, or by introducing $k$ into the bracket $(\gamma t +\delta)^n$. In our experimentation we have chosen the first option, since in general we have observed that it provides better timings. 

However, we can do better in case that we are only interested in the involutions of $S$. Recall that $f$ is an involution iff $f\circ f=\mbox{id}_{{\Bbb R}^3}$. From Eq. \eqref{fundam-eq}, one can see that $f\circ f=\mbox{id}_{{\Bbb R}^3}$ iff the corresponding $\varphi$ satisfies $\varphi\circ \varphi=\mbox{id}_{{\Bbb R}^2}$. Since
\[
\varphi(t,s)=(\varphi_1(t,s),\varphi_2(t,s))=(\psi(t),s\cdot k(\gamma t+\delta)^n+c(t)),
\]
imposing $(\varphi\circ \varphi)(t,s)=(t,s)$ one gets two possibilities:

\begin{itemize}
\item [(i)] $(\varphi_1\circ \varphi_1)(t,s)=t$, i.e. $(\psi\circ \psi)(t)=t$. In turn, this implies that 
\[
\alpha^2-\delta^2=0,\mbox{ }\beta(\alpha+\delta)=0,\mbox{ }\gamma(\alpha+\delta)=0.
\]
Therefore, either $\alpha=-\delta$, or $\alpha+\delta\neq 0$ and $\alpha=\delta$, $\beta=\gamma=0$. 
\item [(ii)] $\varphi_2(\varphi_1(t),\varphi(t,s))=s$, which implies 
\[
\left[s\cdot k(\gamma t+\delta)^n+b(t)\right]\cdot k\cdot \left[\gamma\cdot \frac{\alpha t +\beta}{\gamma t+\delta}+\delta\right]^n+c(\psi(t))=s.
\]
From here, we deduce that
\[
k^2\cdot \left[\gamma(\alpha +\delta)t+(\gamma \beta+\delta^2)\right]^n=1,
\]
which in turn yields 
\[
\gamma(\alpha+\delta)=0,\mbox{ }k^2(\gamma \beta+\delta^2)^n=1.
\]
Thus, either $\alpha=-\delta$ and $k^2(\gamma \beta+\delta^2)^n=1$, or $\alpha=\delta$, $\gamma=0$ and $k^2\delta^{2n}=1$.
\end{itemize}

Putting (i) and (ii) together, we get the following result.

\begin{theorem}\label{fund3}
Let $S$ be a rational ruled surface properly parametrized as in Eq. \eqref{surf}, which is not doubly ruled, let $\bfq(t)=(q_1(t),q_2(t),q_3(t))$, and let also $n=\mbox{max}\{\deg(q_1(t)),\deg(q_2(t)),\deg(q_3(t))\}$. Let $f({\bf x})=\bfQ {\bf x}+\bfb$ be an involution of $S$, and let $\varphi:\RR^2\to \RR^2$ be the birational transformation making the diagram in Eq. \eqref{eq:fundamentaldiagram} commutative. Then $\varphi(t,s)$ is as in Eq. \eqref{phi-funct-2}, with $\psi(t)$ as in Eq. \eqref{eq:Moebius}, and: 
\begin{itemize}
\item [(I)] $\alpha=-\delta$, $k^2(\gamma \beta+\delta^2)^n=1$, or
\item [(II)] $\beta=\gamma=0$, $k^2\delta^{2n}=1$, $\alpha=\delta\neq 0$. 
\end{itemize}
\end{theorem}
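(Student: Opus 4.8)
The plan is to reduce the statement to a condition on $\varphi$ alone and then read off the parameter constraints by a direct computation, along the lines already sketched before the theorem.

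First I would note that, since $\bfx$ is proper, Theorem \ref{th-fundam} expresses $\varphi = \bfx^{-1}\circ f\circ \bfx$ as a birational map of the plane, uniquely determined by $f$. Hence $\varphi\circ\varphi = \bfx^{-1}\circ (f\circ f)\circ \bfx$ as rational maps, and because $\bfx$ is dominant this composition equals $\mathrm{id}_{\RR^2}$ (at a generic point) if and only if $f\circ f = \mathrm{id}_{\RR^3}$. Thus $f$ is an involution of $S$ precisely when the associated $\varphi$ is an involution of $\RR^2$. (As in Remark \ref{rem-basepoints}, the identity $\varphi\circ\varphi = \mathrm{id}$ is understood at a generic point of the parameter space, away from base points.) Since an involution is in particular a symmetry, Theorem \ref{fund2} already gives $\varphi(t,s) = (\psi(t),\, k(\gamma t+\delta)^n s + c(t))$ with $\psi$ as in Eq. \eqref{eq:Moebius}, so it only remains to impose $\varphi\circ\varphi = \mathrm{id}$, which splits into two scalar identities in $(t,s)$, one per coordinate.

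For the first coordinate the identity is $\psi(\psi(t)) = t$. Composing $\psi$ with itself and clearing denominators, $\psi\circ\psi$ has numerator $(\alpha^2+\beta\gamma)t + \beta(\alpha+\delta)$ and denominator $\gamma(\alpha+\delta)t + (\gamma\beta+\delta^2)$; equating this, up to a common scalar, with the identity map gives
\[
\alpha^2-\delta^2 = 0, \qquad \beta(\alpha+\delta) = 0, \qquad \gamma(\alpha+\delta) = 0,
\]
so that either $\alpha = -\delta$, or else $\alpha+\delta\neq 0$, forcing $\alpha = \delta$ and $\beta = \gamma = 0$. For the second coordinate I would substitute $\varphi$ into $\varphi_2\bigl(\psi(t),\varphi_2(t,s)\bigr) = s$; using $\gamma\psi(t)+\delta = \dfrac{\gamma(\alpha+\delta)t+(\gamma\beta+\delta^2)}{\gamma t+\delta}$, the coefficient of $s$ on the left simplifies to
\[
k^2(\gamma t+\delta)^n\,(\gamma\psi(t)+\delta)^n = k^2\bigl[\gamma(\alpha+\delta)t + (\gamma\beta+\delta^2)\bigr]^n,
\]
while the $s$-free part gives only a separate relation on $c(t)$, irrelevant to the claimed constraints. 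Forcing the displayed polynomial in $t$ to equal the constant $1$ — recall $n\geq 1$, since $S$ is not cylindrical — yields $\gamma(\alpha+\delta) = 0$ and $k^2(\gamma\beta+\delta^2)^n = 1$.

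It remains to merge the two analyses. If $\alpha = -\delta$, the second-coordinate conditions read $k^2(\gamma\beta+\delta^2)^n = 1$, and the non-degeneracy $\alpha\delta-\beta\gamma\neq 0$ becomes $\delta^2+\beta\gamma\neq 0$, consistent with the equation just obtained; this is alternative (I). If instead $\alpha = \delta$, $\beta = \gamma = 0$, then $\gamma\beta+\delta^2 = \delta^2$ gives $k^2\delta^{2n} = 1$, and $\alpha\delta - \beta\gamma = \delta^2 \neq 0$ forces $\alpha = \delta \neq 0$; this is alternative (II). I do not foresee a real obstacle: the only delicate points are the passage from $f\circ f = \mathrm{id}$ to $\varphi\circ\varphi = \mathrm{id}$ at the level of rational maps, which is exactly where properness and the genericity caveat of Remark \ref{rem-basepoints} are used, and the routine check that the M\"obius non-degeneracy condition is compatible with each branch — in particular the deduction $\delta\neq 0$ in branch (II).
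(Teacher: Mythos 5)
Your argument is correct and follows essentially the same route as the paper: use Theorem \ref{fund2} for the shape of $\varphi$, translate $f\circ f=\mathrm{id}_{\RR^3}$ into $\varphi\circ\varphi=\mathrm{id}_{\RR^2}$ via properness of $\bfx$, and impose the identity coordinatewise, obtaining $\alpha^2=\delta^2$, $\beta(\alpha+\delta)=0$, $\gamma(\alpha+\delta)=0$ from $\psi\circ\psi=\mathrm{id}$ and $k^2\bigl[\gamma(\alpha+\delta)t+(\gamma\beta+\delta^2)\bigr]^n=1$ from the second component, then merging the two branches exactly as in cases (I) and (II). Your explicit remark that $n\geq 1$ (since $S$ is not cylindrical) is a small but welcome justification the paper leaves implicit.
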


\noindent In case (I) of Theorem \ref{fund3} we have dropped the number of parameters by 2, so we end up with 2 parameters at most. In case (II) of Theorem \ref{fund3} we observe that the only possibility for the function $\varphi$, besides the identity, is $\varphi(t,s)=(t,\pm s+c(t))$. 

\subsection{Computing the symmetries.} \label{computing}

In order to find the symmetries $f({\bf x})=\bfQ {\bf x}+\bfb$ of $S$ we must determine first the functions $a(t),\psi(t)$ in Eq. \eqref{phi-funct}. For this purpose, taking norms in Eq. \eqref{at} we reach the condition

\begin{equation}\label{eq-compu}
\Vert \bfq(t)\Vert^2-k^2\cdot (\gamma t+\delta)^{2n}\cdot \Vert \bfq(\psi(t))\Vert^2=0.
\end{equation}

\noindent Setting all the coefficients in $t$ of the left hand-side of Eq. \eqref{eq-compu} equal to zero, we get a polynomial system ${\mathcal P}$ of degree $2n$. This polynomial system obviously contains the variables $\gamma,\delta,k$. However, one can see that ${\mathcal P}$ contains the variables $\alpha,\beta$ as well. 

\begin{lemma} \label{noconstant}
The polynomial system ${\mathcal P}$ has degree $2n$ and contains explicitly the variables $k,\gamma,\delta,\alpha,\beta$. 
\end{lemma}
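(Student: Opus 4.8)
The plan is to turn the left-hand side of Eq.~\eqref{eq-compu} into a single polynomial in $t$ with coefficients in $\RR[\alpha,\beta,\gamma,\delta,k]$ and then read off both assertions of the lemma from two of its $t$-coefficients. Write $q_i(t)=\sum_{j=0}^{\ell_i}a^{(i)}_j t^j$ with $\ell_i=\deg(q_i)$, so that $\ell_i\le n$ for every $i$ and $\ell_{i_0}=n$ for at least one index $i_0$; note also that $n\ge 1$, since $S$ is not cylindrical. Substituting $\psi(t)=\frac{\alpha t+\beta}{\gamma t+\delta}$ gives $q_i(\psi(t))=N_i(t)/(\gamma t+\delta)^{\ell_i}$ with $N_i(t)=\sum_{j=0}^{\ell_i}a^{(i)}_j(\alpha t+\beta)^j(\gamma t+\delta)^{\ell_i-j}$, hence, since $\ell_i\le n$, we get $(\gamma t+\delta)^{2n}\Vert\bfq(\psi(t))\Vert^2=\sum_{i=1}^3\widetilde N_i(t)^2$, where $\widetilde N_i(t):=(\gamma t+\delta)^{n-\ell_i}N_i(t)=\sum_{j=0}^{\ell_i}a^{(i)}_j(\alpha t+\beta)^j(\gamma t+\delta)^{n-j}$ lies in $\RR[\alpha,\beta,\gamma,\delta][t]$ and satisfies $\deg_t\widetilde N_i\le n$. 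Thus the left-hand side of Eq.~\eqref{eq-compu} is the polynomial $P(t)=\Vert\bfq(t)\Vert^2-k^2\sum_{i=1}^3\widetilde N_i(t)^2\in\RR[\alpha,\beta,\gamma,\delta,k][t]$, and $\mathcal P$ is precisely the system obtained by setting all $t$-coefficients of $P(t)$ to zero. Below I write $[t^m]g$ for the coefficient of $t^m$ in a polynomial $g$.

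For the degree assertion, since $n=\max_i\deg(q_i)$ the leading coefficient of $\Vert\bfq(t)\Vert^2$ is $C_n:=\sum_{i:\ell_i=n}\big(a^{(i)}_n\big)^2$, a sum of squares of \emph{real} numbers containing at least one nonzero term, so $C_n>0$ and $\deg_t\Vert\bfq(t)\Vert^2=2n$. As $\deg_t\widetilde N_i\le n$ we get $\deg_t\big(k^2\sum_i\widetilde N_i^2\big)\le 2n$, hence $\deg_t P\le 2n$, with equality because $[t^{2n}]P=C_n-k^2\sum_i h_i^2$, where $h_i:=[t^n]\widetilde N_i(t)=\sum_{j=0}^{\ell_i}a^{(i)}_j\alpha^j\gamma^{n-j}$, is not the zero element of $\RR[\alpha,\beta,\gamma,\delta,k]$ (it specializes to $C_n\neq 0$ at $k=0$). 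So $\mathcal P$ consists of the $2n+1$ equations $[t^m]P=0$, $0\le m\le 2n$, as claimed.

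To see that $k,\alpha,\beta,\gamma,\delta$ all occur explicitly, I would inspect only the extreme coefficients $[t^{2n}]P$ and $[t^0]P$. Here $[t^{2n}]P=C_n-k^2\sum_i h_i^2$, and $\sum_i h_i^2$ is a homogeneous polynomial of degree $2n$ in $\alpha,\gamma$ whose $\alpha^{2n}$-coefficient is $\sum_{i:\ell_i=n}\big(a^{(i)}_n\big)^2=C_n$ (from $j=j'=n$ in each $h_i^2$) and whose $\gamma^{2n}$-coefficient is $\sum_i\big(a^{(i)}_0\big)^2=\sum_i q_i(0)^2=:C_0$ (from $j=j'=0$); hence $[t^{2n}]P$ contains the monomials $-C_n\,k^2\alpha^{2n}$ and $-C_0\,k^2\gamma^{2n}$, neither of which can be cancelled by the $k$-free constant $C_n$. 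Likewise $[t^0]P=C_0-k^2\sum_i\widetilde N_i(0)^2$, with $\widetilde N_i(0)=\sum_{j=0}^{\ell_i}a^{(i)}_j\beta^j\delta^{n-j}$, and the same computation shows it contains $-C_n\,k^2\beta^{2n}$ and $-C_0\,k^2\delta^{2n}$. Since $n\ge 1$, $C_n>0$, and $C_0>0$ — the last point because $\gcd(q_1,q_2,q_3)=1$ forbids all the $q_i$ from vanishing at $t=0$ — each of $k,\alpha,\beta,\gamma,\delta$ appears in one of these two equations, and hence in $\mathcal P$.

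The argument is mostly bookkeeping; the one place where care is really needed — and, I suspect, the reason the surrounding text merely says ``one can see'' that $\alpha,\beta$ occur — is checking that the relevant coefficients of $\sum_i h_i^2$ and of $\sum_i\widetilde N_i(0)^2$ are sums of squares of real numbers, so that no cancellation can make them vanish, together with the appeal to $\gcd(q_1,q_2,q_3)=1$ to guarantee $C_0>0$. Once these observations are in place, no genuine obstacle remains.
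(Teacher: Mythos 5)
Your proposal is correct, but it handles the only nontrivial point -- that $\alpha,\beta$ really occur in ${\mathcal P}$ -- by a genuinely different route than the paper. The paper's proof treats the degree claim and the occurrence of $k,\gamma,\delta$ as evident and concentrates on $\alpha,\beta$: it notes that these could only be absent if $\Vert\bfq(t)\Vert$ were constant, and excludes this because $\bfq(t)$ is a non-constant polynomial curve, hence unbounded, hence not contained in any sphere. You instead do explicit coefficient bookkeeping and exhibit monomials that cannot cancel: the $t^{2n}$-coefficient of the left-hand side of Eq.~\eqref{eq-compu} contains $-C_n k^2\alpha^{2n}$ and $-C_0 k^2\gamma^{2n}$, and the $t^0$-coefficient contains $-C_n k^2\beta^{2n}$ and $-C_0 k^2\delta^{2n}$, where $C_n$ (the leading coefficient of $\Vert\bfq(t)\Vert^2$) is positive as a sum of real squares and $C_0=\sum_i q_i(0)^2>0$ because $\gcd(q_1,q_2,q_3)=1$; so your argument uses the gcd normalization where the paper uses unboundedness, and neither needs the other's ingredient. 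What your version buys is precision: it identifies which equations of ${\mathcal P}$ contain which unknowns, and it verifies that the $t^{2n}$-coefficient is not identically zero, so the system genuinely has $2n+1$ equations; the paper's version buys brevity and a conceptual reason. One interpretive caveat: you read ``degree $2n$'' as the degree in $t$ of the left-hand side of Eq.~\eqref{eq-compu}, whereas Remark~\ref{degree}, which compares $\deg({\mathcal P})$ with the degree $N$ of the naive implicit system in its unknowns, indicates the intended meaning is the degree of the equations of ${\mathcal P}$ in the unknowns; your expansion actually yields that reading too (each $t$-coefficient is $k^2$ times a form of degree $2n$ in $\alpha,\beta,\gamma,\delta$, and your extreme monomials show degree $2n$ is attained), so nothing is missing, but it would be worth phrasing the conclusion in those terms.
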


\begin{proof} We just need to show that $\alpha,\beta$ also appear in ${\mathcal P}$. The only possibility for $\alpha,\beta$ not appearing in ${\mathcal P}$ is that $\Vert \bfq(t)\Vert$ is a constant ${\bf c}$. But let us see that this cannot happen. Indeed, suppose that $\Vert \bfq(t)\Vert={\bf c}$. Then $\bfq(t)$ parametrizes a curve contained in a sphere centered at the origin, of radius ${\bf c}$. However, the curve parametrized by $\bfq(t)$ is polynomial, therefore non-bounded, so the curve parametrized by $\bfq(t)$ cannot be contained in a sphere.
\end{proof}

\begin{remark} \label{degree}
The degree of the polynomial system corresponding to the naive method described in the Introduction to the paper is $N$, the degree of the implicit equation of $S$. From Proposition 1 in \cite{Chen2}, $N\leq \mbox{deg}(\bfp(t))+n-\lambda$, where $\mbox{deg}(\bfp(t))$ is the maximum degree in $t$ of the components of $\bfp(t)$ previously reduced to the common denominator, and $\lambda$ is the degree of a certain gcd. Additionally, $\lambda=0$ unless $\bfx(t,s)$ has certain type of base points. Generically, and in particular in the absence of this type of base points, we have $N=\mbox{deg}(\bfp(t))+n$. Therefore, $\deg({\mathcal P})\leq N$ iff $\mbox{deg}(\bfp(t))\geq n$, where $n=\mbox{deg}(\bfq(t))$. In special situations, and in particular in the presence of certain base points (see Proposition 1 in \cite{Chen2}), the analysis is less clear. 
\end{remark}

Since we can always drop at least one parameter in the polynomial system ${\mathcal P}$ (for instance, assuming that one of the variables $\alpha,\beta,\gamma,\delta$ is 1), we need to deal with polynomial systems of at most 4 unknowns. In the case of involutions we can reduce the number of unknowns to 2, so we end up with a bivariate system. In our experimentation we have observed that the system ${\mathcal P}$ is usually zero-dimensional. The interested reader can check the recent paper \cite{BS16} on solving zero-dimensional polynomial systems in any number of variables; for bivariate systems, see \cite{BLMPRS16, DET09}. Although it is less frequent, it can happen that ${\mathcal P}$ has infinitely many solutions (for instance, when all the components of $\bfq(t)$ are linear), as well. For now, we will suppose that ${\mathcal P}$ has finitely many solutions; we will address the case when ${\mathcal P}$ has infinitely many solutions later. 

Now if ${\mathcal P}$ has no real solutions, then $S$ has no real symmetries. If ${\mathcal P}$ is consistent, each real solution $\alpha,\beta,\gamma,\delta,k$ of ${\mathcal P}$ corresponds to a tentative symmetry $f({\bf x})=\bfQ {\bf x}+\bfb$, where $\bfQ$ can be found from Eq. \eqref{at} by solving a linear system. Whenever $\bfQ$ is orthogonal, in order to find $\bfb$ we go back to Eq. \eqref{fundam-eq}. Taking into account that $f({\bf x})=\bfQ {\bf x}+\bfb$ and Theorem \ref{fund2}, we get

\begin{equation}\label{system}
\left\{\begin{array}{ccc}
\bfQ_{11}\cdotp p_1(t)+\bfQ_{12}\cdotp p_2(t)+\bfQ_{13}\cdotp p_3(t)+b_1 & = & p_1(\psi(t))+c(t)q_1(\psi(t)),\\
\bfQ_{21}\cdotp p_1(t)+\bfQ_{22}\cdotp p_2(t)+\bfQ_{23}\cdotp p_3(t)+b_2 & = & p_2(\psi(t))+c(t)q_2(\psi(t)),\\
\bfQ_{31}\cdotp p_1(t)+\bfQ_{32}\cdotp p_2(t)+\bfQ_{33}\cdotp p_3(t)+b_3 & = & p_3(\psi(t))+c(t)q_3(\psi(t)).
\end{array}\right.
\end{equation}

The $\bfQ_{ij}$ are the entries of the matrix $\bfQ$, and $\bfb=(b_1,b_2,b_3)$. Since at this point we know $\alpha,\beta,\gamma,\delta,k$, $a(t)$ and $\psi(t)$ are known too. However, $c(t),\bfb$ are not known yet. Now we proceed as follows: 

\begin{itemize}
\item [(i)] Multiply the first equation of Eq. \eqref{system} by $q_2(\psi(t))$. Then multiply the second equation of Eq. \eqref{system} by $q_1(\psi(t))$, and subtract both equations. This way, we reach an equation $E_1(t,b_1,b_2)=0$, linear in $b_1,b_2$. 
\item [(ii)] Proceeding in the same way with the second and third equations, we get $E_2(t,b_2,b_3)=0$, linear in $b_2,b_3$. 
\item [(iii)] Evaluate $E_1(t,b_1,b_2)=0$ and $E_2(t,b_2,b_3)=0$ at several random $t$-values. This way we get a linear system in $b_1,b_2,b_3$, whose solution provides $\bfb$.
\item [(iv)] Finally, we check that there exists $c(t)$, rational, satisfying Eq. \eqref{system}. In the affirmative case, by Theorem \ref{th-fundam}, $f({\bf x})=\bfQ {\bf x}+\bfb$ is a symmetry of $S$.
\end{itemize}

It can happen that the linear system derived in (ii) has infinitely many solutions, depending on (at most two) parameters. In that case, we can still write $c(t)$ in terms of these parameters, and go back to Eq. \eqref{fundam-eq}. Since the number of symmetries of $S$ is finite, this system is necessarily zero-dimensional, and the solutions provide the 
symmetries of $S$.

A similar situation appears in the case when the polynomial system ${\mathcal P}$ has infinitely many solutions. In that case, typically some of the parameters $\alpha,\beta,\gamma,\delta,k$ can be written in terms of the others, that we represent by $\xi_1,\ldots,\xi_p$. Even in this case, we can still compute the matrix $\bfQ$ in terms of $\xi_1,\ldots,\xi_p$ (by solving the linear system which stems from Eq. \eqref{at}, whose unknowns are the $\bfQ_{ij}$), and perform the process (i-iv) to find $\bfb$ and $c(t)$, which again are written in terms of $\xi_1,\ldots,\xi_p$. In order to find the values of $\xi_1,\ldots,\xi_p$ corresponding to symmetries of $S$, we impose that Eq. \eqref{fundam-eq} holds. This yields a zero-dimensional system whose solutions give rise to the symmetries of $S$.

\subsection{The special case of conical surfaces.} \label{subsec-conical}

We say that $S$ is a \emph{conical} surface if all the rulings of $S$ intersect at one point ${\bf p}_0\in S$, called the \emph{vertex}. The vertex can be computed by using the results in \cite{AG17}, and by applying a translation if necessary, we can always assume that ${\bf p}_0$ is the origin. Therefore, if $S$ is rational and properly parametrized we can assume that $S$ is given by means of a parametrization $\bfx(t,s)=s\bfq(t)$, where $\bfq(t)$ is polynomial. If the vertex coincides with the origin, the symmetries of $S$ have the form $f({\bf x})=\bfQ {\bf x}$, with $\bfQ$ orthogonal. Therefore, we have the following result. 

\begin{theorem}\label{th-conic} Let $S$ be a rational conical surface, properly parametrized by $\bfx(t,s)=s\bfq(t)$. Then the function $\varphi(t,s)$ in Proposition \ref{fund} has the form $\varphi(t,s)=(\psi(t),sk(\gamma t+\delta)^n)$, where $\psi(t)=\frac{\alpha t+\beta}{\gamma t+\delta}$, with $\alpha \delta-\beta\gamma\neq 0$ and $n$ equal to the maximum degree of the components of $\bfq(t)$. 
\end{theorem}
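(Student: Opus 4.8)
The plan is to obtain this statement as the specialization of Theorem~\ref{fund2} to the case $\bfp(t)\equiv{\bf 0}$; the only genuinely new point is that the rational function $c(t)$ appearing in Eq.~\eqref{phi-funct-2} must vanish identically when $S$ is conical.

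First I would carry out the standard normalization. Since $S$ is conical with vertex translated to the origin, we have $\bfp(t)\equiv{\bf 0}$ and $\bfx(t,s)=s\bfq(t)$; after dividing the $q_i(t)$ by their gcd (this replaces $\bfq(t)$ by a parallel polynomial vector and, by Lemma~\ref{prop-new} applied with the appropriate $\mu(t)$, preserves properness) we may assume $\bfq(t)$ is polynomial with $\gcd(q_1,q_2,q_3)=1$, so that $S$ meets all the hypotheses of Section~\ref{gen-surf}. Moreover, any symmetry $f$ of $S$ permutes the rulings and preserves incidences, hence fixes the unique point common to all rulings, namely the vertex at the origin; thus $f({\bf 0})={\bf 0}$, which forces $\bfb={\bf 0}$ and $f(\bfx)=\bfQ\bfx$ with $\bfQ$ orthogonal, as asserted in the paragraph preceding the theorem.

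Next I would invoke Proposition~\ref{fund} together with Proposition~\ref{prop-a(t)} (equivalently, Theorem~\ref{fund2}): the associated birational map has the form $\varphi(t,s)=(\psi(t),\,k(\gamma t+\delta)^n s+c(t))$ with $\psi(t)=\frac{\alpha t+\beta}{\gamma t+\delta}$ a M\"obius transformation ($\alpha\delta-\beta\gamma\neq 0$), $k$ a constant, and $n$ the maximum degree of the components of $\bfq(t)$. It then remains to show $c(t)\equiv 0$. For this I would evaluate both sides of Eq.~\eqref{fundam-eq}: the left-hand side is $f(\bfx(t,s))=\bfQ(s\bfq(t))=s\,\bfQ\bfq(t)$, and the right-hand side is $\bfx(\varphi(t,s))=\bigl(k(\gamma t+\delta)^n s+c(t)\bigr)\bfq(\psi(t))$. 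Using Eq.~\eqref{at}, i.e. $\bfQ\bfq(t)=k(\gamma t+\delta)^n\bfq(\psi(t))$, the left-hand side becomes $s\,k(\gamma t+\delta)^n\bfq(\psi(t))$, and comparing the two expressions yields $c(t)\,\bfq(\psi(t))={\bf 0}$ for generic $t$. Since $\bfq(t)$ parametrizes a genuine (unbounded) space curve it is not identically the zero vector, and neither is $\bfq(\psi(t))$ because $\psi$ is a birational self-map of $\mathbb{P}^1(\RR)$; hence $c(t)\equiv 0$, and $\varphi(t,s)=(\psi(t),\,sk(\gamma t+\delta)^n)$.

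I do not expect a real obstacle here: the computation collapses to a single line once Eq.~\eqref{at} is substituted. The only points that need a little care are the legitimacy of the normalization making $\bfq(t)$ polynomial with coprime components (so that Proposition~\ref{fund}, Proposition~\ref{prop-a(t)} and Theorem~\ref{fund2} apply verbatim) and the remark that a symmetry of a conical surface must fix the vertex, which is what removes the translational part and pins down $f(\bfx)=\bfQ\bfx$.
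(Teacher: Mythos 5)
Your proposal is correct and follows essentially the same route as the paper: the paper states this theorem as an immediate specialization of Theorem~\ref{fund2} once one notes that a symmetry of a conical surface fixes the vertex (so $\bfb={\bf 0}$ when the vertex is at the origin), and your substitution of $\bfp\equiv{\bf 0}$ into Eq.~\eqref{fundam-eq} together with Eq.~\eqref{at} to force $c(t)\equiv 0$ is exactly the verification the paper leaves implicit. No gaps; the only addition you make is spelling out the one-line computation $c(t)\,\bfq(\psi(t))={\bf 0}\Rightarrow c(t)\equiv 0$, which is fine.
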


In particular, computing the symmetries of conical surfaces $\bfx(t,s)=s\bfq(t)$ is easier, since in this case the function $c(t)$ in Eq. \eqref{phi-funct} and $\bfb$ are identically zero.  

\subsection{A detailed example} \label{detailed}

We illustrate the previous ideas in the following example.
\begin{example}\label{example2}
Let $S$ be the rational ruled surface parametrized by $\pmb{x}(t,s)=\pmb{p}(t)+s\cdot \pmb{q}(t)$, with
$\bfp(t)=(p_1(t),p_2(t),p_3(t))$ and
\begin{equation}\label{p-example}
\begin{array}{rcl}
p_1(t) & = & \displaystyle{\frac{2t^8-10t^6-10t^4+5t^2+1}{t^2+1}},\\[0.4cm]
p_2(t) & = &-\displaystyle{\frac{t^9-6t^7+6t^3+t^2-3t+1)}{t^2+1}},\\[0.4cm]
p_3(t) & = & t^7+3t^5+3t^3+t+5,
\end{array}
\end{equation}
and
\begin{equation}\label{q-example}
\bfq(t)=(-t^6+7t^4-7t^2+1,2t(t^4-6t^2+1),(t^2+1)^3).
\end{equation}

\noindent In this case, $n=6$ (see Eq. \eqref{eq-n}).

The system ${\mathcal P}$, consisting of all the coefficients in $t$ of Eq. \eqref{eq-compu}, is a polynomial system of degree 12. In order to compute the solutions of ${\mathcal P}$, we consider separately the solutions with $\gamma=0$ and $\gamma\neq 0$. In the first case, since $\gamma=0$ the variable $\delta$ must be nonzero, and we can assume $\delta=1$. Here, besides the identity, we get the solutions
	$$\{\alpha=1,\beta=0, k=-1\},\; \{\alpha=-1,\beta=0, k=1\}\; \text{and} \; \{\alpha=-1,\beta=0, k=-1\},$$ 
corresponding to 
\[\varphi_1(t,s)=(t,-s+c(t)),\mbox{ }\varphi_2(t,s)=(-t,s+c(t)),\mbox{ }\varphi_3(t,s)=(-t,-s+c(t)).\]
	
If $\gamma\neq0$, we can assume $\gamma=1$. The solutions of ${\mathcal P}$ in this case are

	$$\begin{array}{rl}
	\{\alpha=0, \beta=1, \delta=0, k=1\},&\{\alpha=0, \beta=1, \delta=0, k=-1\} \\[0.2cm]
	
	\{\alpha=0, \beta=-1, \delta=0, k=1\} & \{\alpha=0, \beta=-1, \delta=0, k=-1\} \\[0.2cm]
	
	\{\alpha=1, \beta=1, \delta=-1, k=\frac{1}{8}\} &\{\alpha=1, \beta=1, \delta=-1, k=-\frac{1}{8}\}\\[0.2cm]
	
	\{\alpha=1, \beta=-1, \delta=1, k=\frac{1}{8}\}& \{\alpha=1, \beta=-1, \delta=1, k=-\frac{1}{8}\}\\[0.2cm]
	
	\{\alpha=-1, \beta=1, \delta=1, k=\frac{1}{8}\}&
	\{\alpha=-1, \beta=1, \delta=1, k=-\frac{1}{8}\}\\[0.2cm]
	
	\{\alpha=-1, \beta=-1, \delta=-1, k=\frac{1}{8}\}& \{\alpha=-1, \beta=-1, \delta=-1, k=-\frac{1}{8}\},
	\end{array}$$
	which give 12 more possibilities for $\varphi$.
	
	Now for each $\varphi(t,s)$, we need to find $f({\bf x})=\bfQ {\bf x}+\bfb$. Take for instance the solution 
	\[
	\{\alpha=\delta=0,\mbox{ }\beta=\gamma=1,\mbox{ }k=-1\},
	\]
	corresponding to $\varphi(t,s)=\left(\frac{1}{t},-st^6+c(t)\right)$. In this case, Eq. \eqref{at} corresponds to 
	\[
	\bfQ \cdot\bfq(t)=-t^6\cdot \bfq\left(\frac{1}{t}\right),
	\]
	with $\bfq(t)$ as in Eq. \eqref{q-example}, which yields a linear system for the entries of $\bfQ$. After solving this linear system, we get 
	
	\begin{equation}\label{Q-example}
	\bfQ=\left( \begin{array}{ccr}
		-1 & 0 & 0\\		
		0 & 1 & 0 \\
		0 & 0 & -1\\
		\end{array}\right), 
		\end{equation}
which corresponds to an orthogonal matrix, and in fact to an axial symmetry. In order to compute $\bfb$, we use Eq. \eqref{system} with the $p_i(t)$ in Eq. \eqref{p-example}, the $q_i(t)$ in Eq. \eqref{q-example}, the $\bfQ$ in Eq. \eqref{Q-example}, and $\psi(t)=\frac{1}{t}$. We eliminate $c(t)$ by multiplying the equations in Eq. \eqref{system} by appropriate factors and subtracting, and then we get two linear equations in $b_1,b_2,b_3$, with coefficients depending on $t$. By evaluating these equations at random values of $t$, we get a linear system whose solution corresponds to 

\begin{equation} \label{b-example}
\bfb=\left( \begin{array}{ccc}
		4 &		
	    0 &
		10
		\end{array}\right)^T.
		\end{equation}
	
Going back to Eq. \eqref{system}, we observe that $c(t)=-\frac{t^8+1}{t}$, so $f({\bf x})=\bfQ {\bf x}+\bfb$ with the $\bfQ$ in Eq. \eqref{Q-example} and the $\bfb$ in Eq. \eqref{b-example} is a symmetry of $S$. In fact, $f({\bf x})$ is a an axial symmetry with respect to the line $\{x=2,z=5\}$. 

Proceeding in the same way, we obtain two reflections, three axial symmetries and two rotations composed with reflections. The symmetry planes and symmetry axes of the surface are shown in Fig. \ref{ej}. 
	
\begin{figure}
\begin{center}
$$\begin{array}{cc}
\includegraphics[scale=0.23]{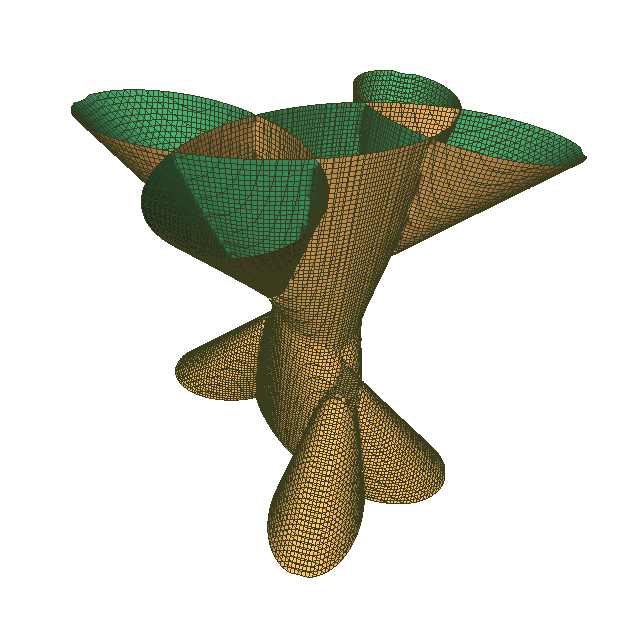} & \includegraphics[scale=0.45]{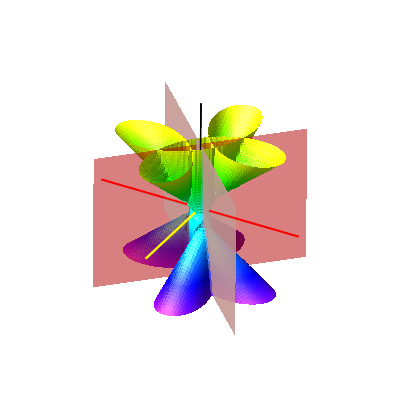}
\end{array}$$
\end{center}
\caption{The surface in Example \eqref{example2}, and its planes and axes of symmetry (right).}\label{ej}
\end{figure}
	
\end{example}

\section{Experimentation and performance of the method.}\label{sec-exp}

We have implemented the method described in Section \ref{sec-symmetries} in the computer algebra system Maple 18, and we have tried several examples in an Intel(R) Core(TM) 2, Quad CPU Q6600, with 2.40 GHz and 4 Gb RAM. We summarize the results obtained for 10 representative examples in Tables 1 and 2: for each example, we have included: (1) a picture of the surface; (2) the degree (deg) of the parametrization, i.e. the maximum power of $t$ appearing in the numerators and denominators of $\bfp(t),\bfq(t)$; (3) the computation time (in seconds) of our method for all the symmetries (``all") and only for involutions (``involutions"), and also the computation time of the naive method described in the Introduction to the paper, which uses the implicit equation (``implicit"); (4) the symmetries found. In some cases, the symmetries detected are compositions of rotations and reflections, denoted as ``rotation+reflection". Additionally, in each case we include the parametrization of the surface. The computation time of the method using the implicit equation of the surface assumes that the implicit equation is already available, i.e. it does not include the time required to compute the implicit equation itself. 

Compared to our approach, in general computing the implicit equation of the surface and applying the naive method mentioned in the Introduction to the paper provides worse timings, even if the time to compute the implicit equation is not taken into account. In fact, Maple was able of providing an answer in less than 90 seconds only in two of the examples. In the case of the surface $\bfx_7(t,s)$, the implicit equation is very simple ($F(x,y,z)=x^3-27yz^2$), so the method using the implicit equation is faster than our method. In each example, we write in red the worst timing between our algorithm for computing all the symmetries (``all") and the method using the implicit equation (``implicit"). 

We have observed that almost all the time is spent solving the polynomial system ${\mathcal P}$, arising from Eq. \eqref{eq-compu}. We used the Maple instruction {\tt solve} to find the solutions of this system. In fact, the complexity of the method is dominated by the solution of the polynomial system ${\mathcal P}$. We have also observed that ${\mathcal P}$ is usually zero-dimensional. A recent polynomial bound for solving a zero-dimensional polynomial system is given in \cite{BS16}. Although the case when ${\mathcal P}$ is not zero-dimensional is much less frequent, it can happen as well, for instance when the components of $\bfq(t)$ are linear. In this case, up to our knowledge there is no algorithm other than Gr\"obner bases to solve the problem; the best known complexity, in this case, is simply exponential \cite{F5}.

\newgeometry{left=3cm, right=3cm}

\begin{table}
	
	\begin{tabular*}{\columnwidth}{l@{\extracolsep{\stretch{1}}}*{5}{c}}
		\toprule
		parametrization & picture & deg. & \hspace{0.5cm} computation time & symmetries\\
	    &  &  & \hspace{0.65cm}all\,/\,involutions\,/ & \\
	    
	    &  &  & \hspace{0.65cm}implicit & \\
		\midrule
		&\multirow{3}{*}{\includegraphics[scale=0.09]{surf9}} & & &\\
		& & & & 3 axial \\
		$\bfx_1(t,s)$ & & 9 &\hspace{0.5cm} 9.640\,/\,6.599\,/ &  2 mirror\\ & & &\textcolor{red}{$>90$} & 2 rotational + reflect. \\& & & & \\[0.2cm]
		\multicolumn{5}{c}{\footnotesize{$\bfx_1(t,s)=\left( \frac{2t^8-10t^6-10t^4+5t^2+1}{t^2+1},-\frac{t^9-6t^7+6t^3+t^2-3t+1)}{t^2+1},t^7+3t^5+3t^3+t+5\right)$}}\\
		\multicolumn{5}{c}{\footnotesize{$+s\cdot(2t(t^4-6t^2+1),-t^6+7t^4-7t^2+1,(t^2+1)^3)$}}\\
		\midrule
		$\bfx_2(t,s)$
		& \raisebox{-3em}{\includegraphics[scale=0.18]{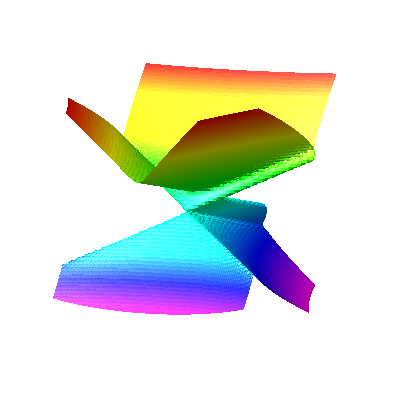}}
		& 7 & \hspace{0.5cm}\vspace{-0.9cm}10.140\,/\,4.899\,/ & 1 reflect.\\
		& & &\vspace{0.5cm} \textcolor{red}{$>90$}& \\
		\multicolumn{5}{c}{\small{$\bfx_2(t,s)=\left( \dfrac{t^7+7t^5+3t^3-t^2-3t+1}{t^2+1},\dfrac{2t(4t^5+4t^3+1)}{t^2+1},t(t^2+1)^2\right)+s\cdot(-t^4-6t^2+3,8t^3,(t^2+1)^2)$}}\\
		\midrule
		$\bfx_3(t,s)$
		& \raisebox{-3em}{\includegraphics[scale=0.17]{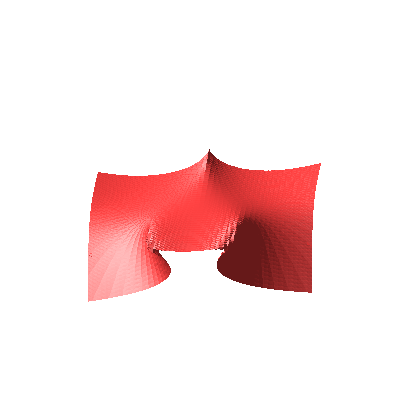}}
		& 7 & \hspace{0.5cm}\vspace{-0.9cm}1.888\,/\,1.778\,/ & 1 reflect. \\
		& & &\vspace{0.5cm} \textcolor{red}{$>90$}& \\
		\multicolumn{5}{c}{\small{$\bfx_3(t,s)=(t^6-6t^4+t^2+2t,-t^7+6t^5-t^3+t^2+t,t^3+t)+s\cdot(t^5-6t^3+t,-t^6+6t^4-t^2+1,t^2+1)$}}\\
		\midrule
		$\bfx_4(t,s)$
		& \raisebox{-3em}{\includegraphics[scale=0.21]{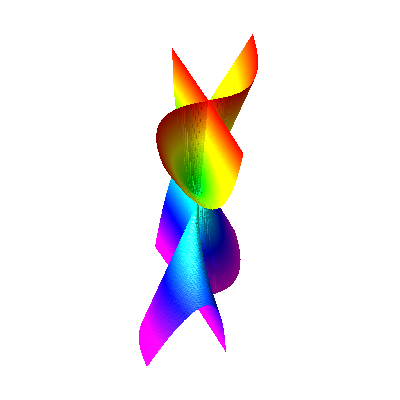}}			& 5 & \hspace{0.5cm}\vspace{-0.9cm}1.684\,/\,1.607\,/ & 1 reflect. \\
		& & &\vspace{0.5cm} \textcolor{red}{$>90$}& \\
		\multicolumn{5}{c}{$\bfx_4(t,s)=\left( \dfrac{t^2}{t^2+1},\dfrac{t^4}{t^2+1},\dfrac{t^5}{t^2+1}\right)+s\cdot(t,t^3,1)$}\\
		\midrule
		&\multirow{6}{*}{\includegraphics[scale=0.18]{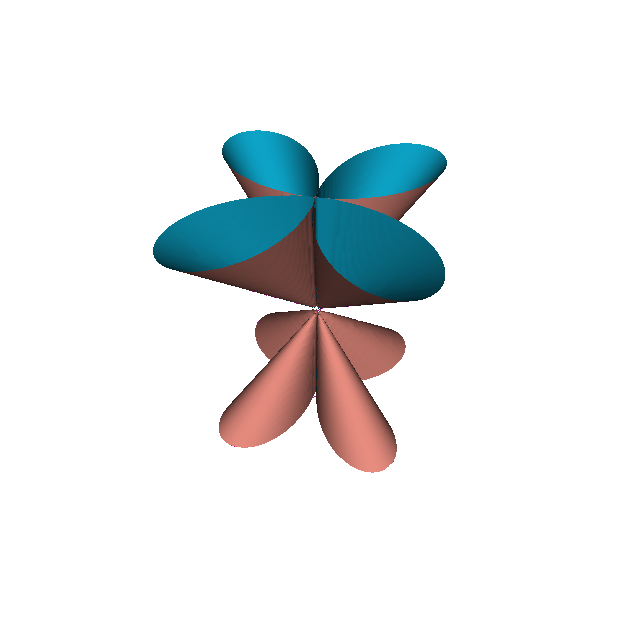}} & & & & \\
		& & & & 5 reflect. \\ & & &\hspace{0.5cm} 4.587\,/\,3.291\,/ & 5 axial sym. \\
		$\bfx_5(t,s)$ & & 6 &\textcolor{red}{$>90$} &  central\\
		& & &  & 2 rotational sym. \\  & & & & 2 rotational +  reflect. \\[0.2cm]
		\multicolumn{5}{c}{$\bfx_5(t,s)=s\cdot( 2t(t^4-6t^2+1),(-t^2+1)(t^4-6t^2+1),(t^2+1)^3)$}\\
		\bottomrule
		
	\end{tabular*}
	\caption{}
	\end{table}

\begin{table}
	
	\begin{tabular*}{\columnwidth}{cc@{\extracolsep{\stretch{1}}}cccc}
		\toprule
		parametrization & picture & deg. &\hspace{0.3cm} computation time & symmetries\\
		&  &  & \hspace{0.5cm}all\,/\,involutions & \\
		 &  &  & \hspace{0.65cm}implicit & \\
		\midrule
		$\bfx_6(t,s)$
		& \raisebox{-3em}{\includegraphics[scale=0.17]{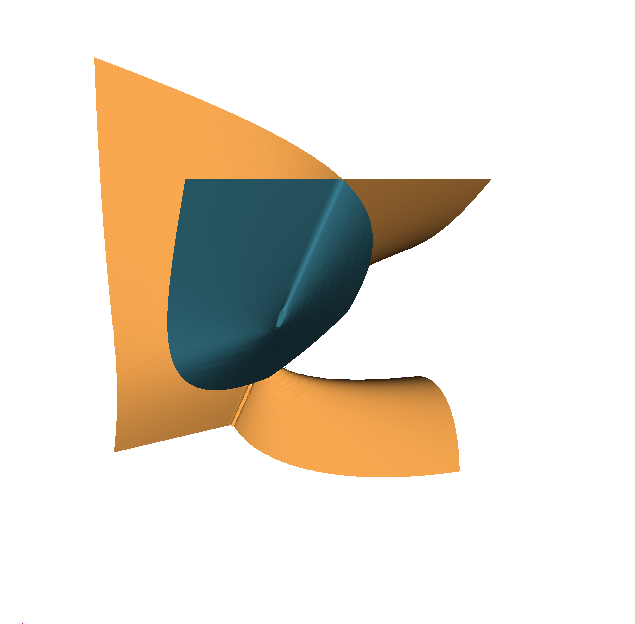}}			& 2 &\hspace{0.3cm} \vspace{-0.9cm}3.448\,/\,3.166\,/ & 1 axial sym. \\
		& & &\vspace{0.5cm} \textcolor{red}{63.648}& \\
		\multicolumn{5}{c}{$\bfx_6(t,s)=(4,1,t)+s\cdot((t+1)^2,t+1,1)$}\\
		\midrule
		& \multirow{6}{*}{\includegraphics[scale=0.18]{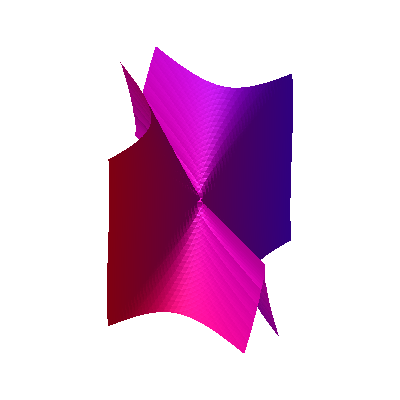}} & & &\\  & & & & central\\
		$\bfx_7(t,s)$ &	& 3 & \hspace{0.3cm}\textcolor{red}{1.451}\,/\,1.185\,/ &  1 reflection \\ & & & 0.296& 1 axial sym. \\  & & & &\\ & & & &\\
		\multicolumn{5}{c}{$\bfx_7(t,s)=s\cdot(3(t+1)^2(t-1),(t-1)^3,(t+1)^3)$}\\
		\midrule
		$\bfx_8(t,s)$
		& \raisebox{-3em}{\includegraphics[scale=0.19]{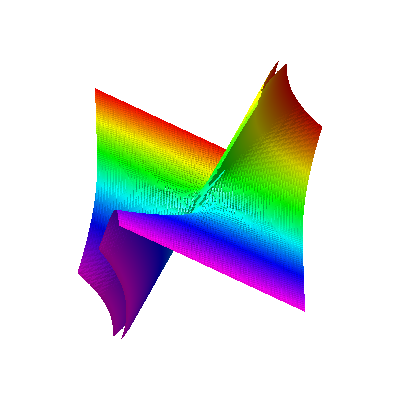}}
		& 7 &\hspace{0.3cm}\vspace{-0.9cm} 1.935\,/\,1.809\,/ & central \\
		& & &\vspace{0.5cm} \textcolor{red}{$>90$}& \\
		\multicolumn{5}{c}{$\bfx_8(t,s)=\left( \dfrac{t^3}{t^2+1},\dfrac{t^5}{t^2+1},\dfrac{t^7}{t^2+1}\right)+s\cdot(-t^5+t,3t^7,-2t^3)$}\\	
		\midrule
		$\bfx_9(t,s)$
		& \raisebox{-3em}{\includegraphics[scale=0.18]{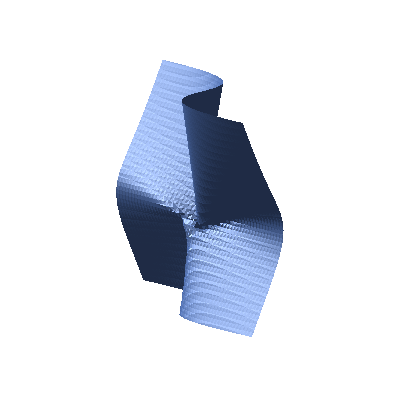}}
		& 6 &\hspace{0.3cm}\vspace{-0.9cm} 1.716\,/\,1.653\,/ & 1 reflection \\
		& & &\vspace{0.5cm} \textcolor{red}{$>90$}& \\
		\multicolumn{5}{c}{$\bfx_9(t,s)=(t^4+t^2+t,t^6+t^3,t^5+t^3+t^2+3t)+s\cdot(t^3+t,t^5,t^4+t^2+3)$}\\
		\midrule
		&\multirow{3}{*}{\includegraphics[scale=0.15]{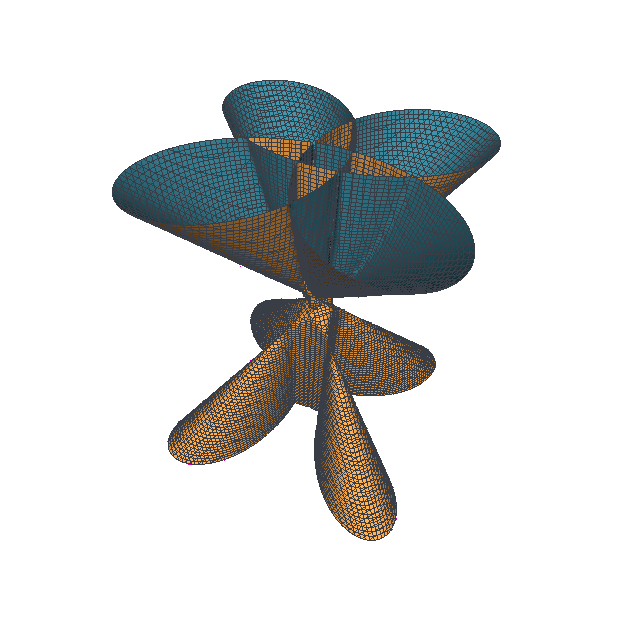}} & & &\\
		& & & & 4 reflect. \\
		$\bfx_{10}(t,s)$ & & 17 &\hspace{0.3cm} 9.828\,/\,6.973\,/ &  1 axial sym.\\ & & &\textcolor{red}{$>90$} & 2 rotational sym. \\& & & & \\[0.2cm]
 	\multicolumn{5}{c}{\footnotesize{$\bfx_{10}(t,s)=\left( -\frac{t^{17}-6t^{15}+6t^{11}-6t^7+6t^3-t^2-t+1}{t^2+1}, \frac{2t(t^{15}-5t^{13}-5t^{11}+t^9+t^7-5t^5-5t^3+t+1)}{t^2+1}, t(t^2+1)^3(t^8+1)\right) $}}\\
	\multicolumn{5}{c}{\footnotesize{$+s\cdot(-t^6+7t^4-7t^2+1,2t(t^4-6t^2+1),(t^2+1)^3)$}}\\	
		\bottomrule
		
	\end{tabular*}
	\caption{}
\end{table}

\restoregeometry

\section{Application to implicit algebraic surfaces under certain conditions.} \label{app}

In this section we will see how to apply the method developed in the previous sections to find the reflections and rotational symmetries (in particular, axial symmetries) of an implicit algebraic surface, under certain conditions. Let $F(x,y,z)$ define an irreducible, implicit algebraic surface $S$ of total degree $N$, and let 
\[
F(x,y,z)=F_N(x,y,z)+F_{N-1}(x,y,z)+\cdots +F_0(x,y,z),
\]
where $F_i(x,y,z)$ denotes the homogeneous form of $F(x,y,z)$ of degree $i=0,1,\ldots,N$. Thus, $F_i(x,y,z)$ is a homogeneous polynomial of degree $i$. In particular, we refer to $F_N(x,y,z)$ as the {\it highest order form} of $F(x,y,z)$. Let ${\bf x}=(x,y,z)$, and let $f({\bf x})=\bfQ {\bf x}+\bfb$ be a symmetry of $S$. The following two lemmas show the connections of the problem treated in this section with the ideas developed in previous sections.

\begin{lemma} \label{FN}
Let $f({\bf x})=\bfQ {\bf x}+\bfb$ be a symmetry of the surface $S$ defined by $F(x,y,z)=0$, where $F$ is irreducible. Then $\widehat{f}({\bf x})=\bfQ {\bf x}$ is a symmetry of the surface defined by $F_N(x,y,z)=0$.
\end{lemma}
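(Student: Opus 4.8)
\textbf{Proof proposal for Lemma \ref{FN}.}

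The plan is to exploit the fact that the highest order form $F_N$ governs the behaviour of $S$ ``at infinity'', and that an isometry $f(\mathbf{x}) = \bfQ\mathbf{x} + \bfb$ acts at infinity exactly through its linear part $\bfQ$. Concretely, I would pass to the projective closure: homogenize $F(x,y,z)$ to $\widehat{F}(x,y,z,w)$ of degree $N$, so that $\widehat{F}(x,y,z,0) = F_N(x,y,z)$. The projective surface $\widehat{S} = \{\widehat{F}=0\}$ is the Zariski closure of $S$, and its intersection with the plane at infinity $\{w=0\}$ is precisely the curve $F_N(x,y,z)=0$ (viewed in that plane). Since $F$ is irreducible, $\widehat{F}$ is irreducible, and $F_N \not\equiv 0$, so this intersection is a genuine hypersurface in $\{w=0\}$.

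Next I would extend the isometry $f$ to a projective transformation $\widehat{f}$ of $\mathbb{P}^3$ via the matrix $\left(\begin{smallmatrix} \bfQ & \bfb \\ 0 & 1\end{smallmatrix}\right)$. Because $f(S)=S$, continuity (or the fact that $\widehat{f}$ is a homeomorphism of $\mathbb{P}^3$ sending the dense subset $S$ of $\widehat{S}$ onto itself) gives $\widehat{f}(\widehat{S}) = \widehat{S}$. The key observation is that $\widehat{f}$ fixes the plane at infinity $\{w=0\}$ as a set, and its restriction to that plane is the \emph{linear} map induced by $\bfQ$ alone — the translation part $\bfb$ disappears at infinity. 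Therefore $\widehat{f}$ maps the hyperplane section $\widehat{S}\cap\{w=0\}$ to itself, i.e. the map $\mathbf{x}\mapsto \bfQ\mathbf{x}$ leaves the surface $F_N(x,y,z)=0$ invariant. Since $\bfQ$ is orthogonal, $\widehat{f}(\mathbf{x}) = \bfQ\mathbf{x}$ is again an isometry of $\RR^3$, and it is a symmetry of $\{F_N=0\}$, which is the claim.

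An alternative, purely algebraic route that avoids any closure/continuity argument: from $f(S)=S$ and irreducibility of $F$, one has $F(\bfQ^{-1}(\mathbf{x}-\bfb)) = \lambda\, F(\mathbf{x})$ for some nonzero constant $\lambda$ (the two polynomials define the same irreducible surface and have the same degree). Substituting $\mathbf{x}\mapsto \tau\mathbf{x}$ and comparing the coefficients of the top power $\tau^N$ on both sides: on the right one gets $\lambda\, F_N(\mathbf{x})$, while on the left $F(\bfQ^{-1}(\tau\mathbf{x}-\bfb)) = F(\tau\bfQ^{-1}\mathbf{x} - \bfQ^{-1}\bfb)$ has top-degree part $F_N(\bfQ^{-1}\mathbf{x})$ (the $-\bfQ^{-1}\bfb$ shift only contributes to strictly lower powers of $\tau$). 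Hence $F_N(\bfQ^{-1}\mathbf{x}) = \lambda F_N(\mathbf{x})$, so $\widehat{f}(\mathbf{x})=\bfQ\mathbf{x}$ maps $\{F_N=0\}$ to itself; as $\bfQ$ is orthogonal this $\widehat{f}$ is an isometry, hence a symmetry of $\{F_N=0\}$.

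The main obstacle — really the only subtle point — is justifying the passage from ``$f$ leaves the surface $S$ invariant'' to ``$F\circ f^{-1}$ is a scalar multiple of $F$''. This needs the irreducibility of $F$ (so that the vanishing ideal of $S$ is principal, generated by $F$) together with the fact that an isometry preserves total degree; given that, $F\circ f^{-1}$ and $F$ generate the same principal ideal and have equal degree, forcing proportionality. Once this is in hand, the extraction of the leading form is a routine degree comparison, and the orthogonality of $\bfQ$ immediately upgrades $\mathbf{x}\mapsto\bfQ\mathbf{x}$ from ``linear symmetry'' to ``isometric symmetry''. I would present the algebraic route in the paper, as it is self-contained and does not require discussing projective closures or topological density of $S$ in $\widehat S$.
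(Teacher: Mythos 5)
Your preferred algebraic route is essentially the paper's own proof: irreducibility of $F$ forces $F\circ f$ (equivalently $F\circ f^{-1}$) to be a constant multiple of $F$, and comparing the degree-$N$ homogeneous parts yields $F_N(\bfQ\,{\bf x})=\lambda F_N({\bf x})$, hence the claim; your $\tau$-scaling is just an explicit way of reading off the top form, and your remark about the subtle step (passing from $f(S)=S$ to proportionality) matches what the paper implicitly assumes. The projective-closure argument you sketch first is a valid alternative viewpoint, but the version you would present coincides with the paper's argument.
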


\begin{proof} Since $F(x,y,z)$ is irreducible by hypothesis, if $f({\bf x})=\bfQ {\bf x}+\bfb$ is a symmetry of $S$ then $F(\bfQ {\bf x}+\bfb)=\lambda F(x,y,z)$, with $\lambda$ a constant. Since 
\[
F(\bfQ {\bf x}+\bfb)=F_N(\bfQ {\bf x})+F_{N-1}(\bfQ {\bf x}+\bfb)+\cdots,
\]
we conclude that $F_N(\bfQ {\bf x})=\lambda F_N(\bf x)$, and the result follows. 
\end{proof}

\begin{lemma} \label{FN-conic}
The surface $F_N(x,y,z)=0$ is a conic surface, with vertex at the origin. 
\end{lemma}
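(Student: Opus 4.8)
The plan is to exploit the homogeneity of the highest order form. Since $F_N(x,y,z)$ is homogeneous of degree $N$, it satisfies $F_N(\lambda\, {\bf x})=\lambda^N F_N({\bf x})$ for every scalar $\lambda$ and every ${\bf x}=(x,y,z)$. First I would use this to show that the zero set of $F_N$ is a union of lines through the origin: if ${\bf x}_0\neq {\bf 0}$ satisfies $F_N({\bf x}_0)=0$, then $F_N(\lambda\, {\bf x}_0)=\lambda^N F_N({\bf x}_0)=0$ for all $\lambda$, so the whole line $\ell_{{\bf x}_0}=\{\lambda\, {\bf x}_0\mid \lambda\in\RR\}$ joining the origin to ${\bf x}_0$ lies on the surface $F_N=0$.

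Next I would note that $N\geq 1$: since $F$ defines a surface of total degree $N$ it is nonconstant, and $F_N$ is its (nonzero) leading form, so $F_N$ is a nonconstant homogeneous polynomial and $F_N=0$ is a genuine algebraic surface. Combining this with the previous step, the surface $F_N=0$ equals the union of the lines $\ell_{{\bf x}_0}$ taken over its nonzero points; in particular it is ruled, and every one of these rulings passes through the origin. Recalling the definition of a conical surface from Section \ref{subsec-conical} (a ruled surface all of whose rulings meet at a single point, the vertex), this is precisely the statement that $F_N=0$ is a conical surface with vertex at the origin. For the purpose of connecting with the earlier sections one may then add that, choosing a parametrization $\bfq(t)$ of the underlying projective plane curve $F_N=0$, the cone admits a parametrization of the form ${\bf x}(t,s)=s\,\bfq(t)$ as used in Section \ref{subsec-conical}.

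I do not expect a serious obstacle here; the only delicate point is ensuring that the zero locus of $F_N$ is genuinely two-dimensional, so that one may legitimately speak of its rulings. This is automatic in the projective/complex setting in which the surfaces of this paper live, for any nonconstant homogeneous $F_N$; over $\RR$ alone the locus could in principle be vacuous for a definite form (as for an ellipsoid), but that degenerate case plays no role once the problem is posed over the algebraic closure, as it is here.
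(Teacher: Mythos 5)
Your argument is correct and is essentially the paper's own proof: both use homogeneity, $F_N(\lambda\,{\bf x})=\lambda^N F_N({\bf x})$, to conclude that the line through the origin and any point of the surface lies on the surface, hence the surface is conical with vertex at the origin. The extra remarks on non-degeneracy and the parametrization $s\,\bfq(t)$ are fine but not needed for the statement itself.
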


\begin{proof} Since $F_N(x,y,z)$ is a homogeneous polynomial, for any constant $\beta$ we have $F_N(\beta x,\beta y,\beta z)=F_N(\beta {\bf x})=\beta^N F_N(x,y,z)$. Thus, for any point $(x,y,z)$ of the surface $F_N(x,y,z)$, the line connecting $(x,y,z)$ with the origin is included in the surface. 
\end{proof}

Now we can make precise the conditions under which the ideas in previous sections can be applied to the problem in this section. Whenever $F_N(x,y,z)$ (1) is irreducible and (2) defines a real, rational surface, from Lemma \ref{FN-conic} we know that $F_N(x,y,z)=0$ is a conic surface $S_N$, with vertex at the origin, whose symmetries can, therefore, be found by applying the method in this paper (in particular, using Theorem \ref{th-conic}). Additionally, if $F_N(x,y,z)=0$ defines a rational surface, since $S_N$ is a conic surface the intersection of the surface with a generic plane is a rational curve that we can parametrize rationally with well-known methods. As a consequence, a parametrization of $S_N$ of the type $\bfx(t,s)=s\bfq(t)$ can be computed. In turn, the symmetries of $S_N$ can be obtained by applying our methods, and taking Lemma \ref{FN} into account, the rotational symmetries and reflections in planes of $S$ can be found from here. The next lemma sheds some light on this last step. 

\begin{lemma} \label{axes}
\begin{itemize}
\item [(1)] If $f({\bf x})=\bfQ {\bf x}+\bfb$ represents a rotational symmetry about an axis ${\mathcal A}$, then $\widehat{f}({\bf x})=\bfQ {\bf x}$ represents a symmetry of the same kind, with axis ${\mathcal A}'$ parallel to ${\mathcal A}$ through the origin. 
\item [(2)] If $f({\bf x})=\bfQ {\bf x}+\bfb$ represents a symmetry with respect to a symmetry plane $\Pi$, then $\widehat{f}({\bf x})=\bfQ {\bf x}$ represents a symmetry with respect to a symmetry plane $\Pi'$ through the origin.
\end{itemize}
\end{lemma}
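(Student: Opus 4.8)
The plan is to exploit the fact that a rotation or a reflection is completely determined by its linear part $\bfQ$ together with one fixed point (or, for a reflection, the hyperplane of fixed points). The isometry $f({\bf x})=\bfQ{\bf x}+\bfb$ has some fixed locus in $\RR^3$: for a rotational symmetry about an axis ${\mathcal A}$, the fixed-point set of $f$ is exactly the line ${\mathcal A}$, and $\bfQ$ is the rotation matrix about the direction $\bfu$ of ${\mathcal A}$; for a reflection in a plane $\Pi$, the fixed-point set of $f$ is exactly $\Pi$, and $\bfQ$ is the reflection in the plane $\Pi_0$ through the origin parallel to $\Pi$ (equivalently, $\bfQ$ fixes the two-dimensional linear subspace parallel to $\Pi$ and negates its normal). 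In both cases, the key observation is that $\widehat f({\bf x})=\bfQ{\bf x}$ is the isometry with the same linear part $\bfQ$ but fixing the origin.

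First I would treat part (1). Write $\theta$ for the rotation angle of $f$ about ${\mathcal A}$ and $\bfu$ for a unit direction vector of ${\mathcal A}$. Since $f$ is a rotation by $\theta$ about ${\mathcal A}$, its linear part $\bfQ$ is precisely the rotation by $\theta$ about the $\bfu$-axis; this is a standard fact about the decomposition of a rigid motion into its linear and translational parts (see \cite{Coxeter}). Hence $\widehat f({\bf x})=\bfQ{\bf x}$ is the rotation by the same angle $\theta$ about the line ${\mathcal A}'$ spanned by $\bfu$, i.e. the line through the origin parallel to ${\mathcal A}$. In particular $\widehat f$ is a rotational symmetry of the same kind (a half-turn stays a half-turn, since $\theta$ is unchanged), with axis ${\mathcal A}'$ parallel to ${\mathcal A}$ and passing through the origin.

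Next I would treat part (2), which is entirely analogous. If $f$ is the reflection in a plane $\Pi$, decompose $\Pi$ as $\Pi = \{P_0 + \bfw : \bfw \in \Pi_0\}$, where $\Pi_0$ is the linear plane through the origin parallel to $\Pi$ and $P_0 \in \Pi$. The linear part $\bfQ$ of $f$ is then the orthogonal reflection across $\Pi_0$: it fixes every vector of $\Pi_0$ and sends the unit normal of $\Pi_0$ to its opposite. Therefore $\widehat f({\bf x})=\bfQ{\bf x}$ is exactly the reflection in the plane $\Pi' := \Pi_0$, which passes through the origin. This proves (2) with $\Pi' = \Pi_0$ parallel to $\Pi$.

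I do not expect any serious obstacle here; the content is essentially the well-known fact that the linear part of an isometry of $\RR^3$ governs its ``type'' (rotation, reflection, etc.) and its invariant directions, while the translational part only moves the fixed locus off the origin. The one point that deserves care is justifying that the linear part of a rotational symmetry about ${\mathcal A}$ really is the rotation about the parallel axis through the origin (and similarly for reflections) — i.e. that conjugating/translating does not change the angle or the reflecting direction. This follows from writing $f$ as the composition of $\widehat f$ with the translation by $\bfb$ and invoking the classification of isometries of $\RR^3$ recalled in Section \ref{gen-surf}; alternatively one checks directly that $\bfQ\bfu=\bfu$ forces ${\mathcal A}'=\RR\bfu$ to be the axis of $\widehat f$, and $\bfQ|_{\Pi_0}=\mathrm{id}$ forces $\Pi_0$ to be the mirror of $\widehat f$.
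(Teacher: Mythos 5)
Your proposal is correct and follows essentially the same route as the paper: both rest on the observation that the axis (resp.\ mirror plane) is pointwise fixed by $f$, which forces $\bfQ\vec{v}=\vec{v}$ (resp.\ $\bfQ$ to fix the parallel plane through the origin), and that the common linear part $\bfQ$ determines the type of the symmetry, so $\widehat{f}$ is the same kind of symmetry with axis/plane translated to the origin. The only difference is cosmetic: you write out part (2), which the paper leaves to the reader, and you phrase the key step as the standard decomposition of a rigid motion into linear and translational parts rather than the explicit fixed-point computation, which you also sketch.
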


\begin{proof} We prove (1); the proof of (2) is similar, and is left to the reader. Let ${\mathcal A}$ be the rotational axis corresponding to $f$, let $P_0$ be a point on the axis ${\mathcal A}$, and let $\vec{v}\in {\Bbb R}^3$ be a vector parallel to ${\mathcal A}$. Since $f$ leaves ${\mathcal A}$ invariant, for any $\lambda \in {\Bbb R}$ we have $f(P_0+\lambda \vec{v})=P_0+\lambda \vec{v}$. Since $f({\bf x})=\bfQ {\bf x}+\bfb$, we get 
\[
f(P_0+\lambda \vec{v})=\bfQ\cdot (P_0+\lambda \vec{v})+\bfb=\bfQ\cdot P_0+\lambda \bfQ\cdot \vec{v}+\bfb=P_0+\lambda \vec{v}.
\]
Since the above equality holds for any $\lambda$, we conclude that $\bfQ\cdot \vec{v}=\vec{v}$. Thus, $\widehat{f}(\beta \vec{v})=\beta \vec{v}$ for any $\beta\in {\Bbb R}$, i.e. $\widehat{f}$ leaves the line ${\mathcal A}'$ parallel to ${\mathcal A}$ through the origin invariant. Since the nature of the symmetry depends upon the eigenvalues of $\bfQ$, and this matrix is common to $f$ and $\widehat{f}$, the result follows. 
\end{proof}

Recall that an axial symmetry is nothing else than a rotational symmetry of angle $\pi$, so Lemma \ref{axes} includes axial symmetries as well. Therefore, whenever $F_N(x,y,z)$ satisfies the hypotheses mentioned before, we can proceed as follows: 

\begin{itemize}
\item [(1)] Compute a parametrization $\bfx(t,s)=s\bfq(t)$ of the surface $S_N$ defined by $F_N(x,y,z)=0$.
\item [(2)] Compute the rotational symmetries and reflections of $\bfx(t,s)=s\bfq(t)$. 
\item [(3)] [rotational] Let ${\mathcal A}'$ be the axis of rotational symmetry of $S_N$, and let $\widehat{f}(\bf x)=\bfQ {\bf x}$ be the corresponding symmetry.
\begin{itemize}
\item [(i)] Apply a rigid motion to the surface $S$ defined by $F(x,y,z)=0$, so that ${\mathcal A}'$ is the $z$-axis.
\item [(ii)] If there exists $\bfb\in {\Bbb R}^3$ such that $f(\bf x)=\bfQ {\bf x}+\bfb$ is a rotational symmetry of $S$ about an axis ${\mathcal A}$, parallel to ${\mathcal A}'$, then for a generic plane $\Pi_{z_0}$, normal to the $z$-axis, the intersection curve $S\cap \Pi_{z_0}$ exhibits central symmetry around the point $P_0={\mathcal A}\cap \Pi_{z_0}$. Central symmetry of $S\cap \Pi_{z_0}$ can be detected with the algorithm in \cite{ALV18}.
\item [(iii)] Check whether or not $S$ has rotational symmetry with respect to the line ${\mathcal A}$ parallel to ${\mathcal A}'$ through $P_0$.
\end{itemize}
\item [(4)] [planar] Let $\Pi'$ be a symmetry plane of $S_N$, and let $\widehat{f}(\bf x)=\bfQ {\bf x}$ be the corresponding symmetry.
\begin{itemize}
\item [(i)] Apply a rigid motion to the surface $S$ defined by $F(x,y,z)=0$, so that $\Pi'$ is the $xz$-plane.
\item [(ii)] If there exists $\bfb\in {\Bbb R}^3$ such that $f(\bf x)=\bfQ {\bf x}+\bfb$ is a reflection in a plane $\Pi$, parallel to $\Pi'$, then for a generic plane $\Pi_{z_0}$, normal to the $z$-axis, the intersection curve $S\cap \Pi_{z_0}$ exhibits axial symmetry with respect to the line $\ell=\Pi\cap \Pi_{z_0}$. Axial symmetry of $S\cap \Pi_{z_0}$ can be detected with the algorithm in \cite{ALV18}.
\item [(iii)] Check whether or not $S$ is symmetric with respect to the plane parallel to $\Pi$ through $\ell$.
\end{itemize}
\end{itemize}

\begin{remark} If the surface $S_N$ defined by $F_N(x,y,z)=0$ is a surface of revolution, not a sphere\footnote{Notice that since we are assuming that $F_N(x,y,z)$ is irreducible, if $S_N$ is a sphere then $S$ is a quadric. However, the symmetries of a quadric can be found by using elementary methods in Linear Algebra}, $S_N$ has an axis ${\mathcal A}$ of revolution and has also infinitely many symmetry planes intersecting at ${\mathcal A}$. Whenever we apply an orthogonal change of coordinates mapping ${\mathcal A}$ to the $z$-axis, the proposed method is also valid in this case. 
\end{remark}

\begin{example} \label{ex-implicit} 
Let $S$ be an algebraic surface implicitly defined by $F(x,y,z)=x^6+y^5z+6x^5+14x^4+16x^3+8x^2+z^2$. The highest order form of $S$ is $F_N(x,y,z)=x^6+y^5z$. The polynomial $F_N(x,y,z)$ is irreducible and defines a rational surface (in fact a conic surface, with vertex at the origin), which can be parametrized, for instance, as 
\[
\left(x,y,\frac{-x^6}{y^5}\right).
\]
In order to compute a parametrization of the form $\bfx(t,s)=s\bfq(t)$, we intersect $S_N$ with the plane $y=2$. This yields the planar curve 
\[
\{x^6+32z=0,\mbox{ }y=2\},
\]
which can be parametrized as $\left(t,2,-\frac{1}{32}t^6\right)$. In turn, $S_N$ can be parametrized as $\bfx(t,s)=s\left(t,2,-\frac{1}{32}t^6\right)$. Using the method in Section 3, one can check that $S_N$ has symmetries with respect to the plane $x=0$, and with respect to the $x$-axis. 

In order to check whether or not $S$ is symmetric with respect to a plane parallel to $x=0$, we intersect $S$ with the plane $z=3$. The resulting curve is 
\[
\{x^6+6x^5+3y^5+14x^4+16x^3+8x^2+9=0,\mbox{ }z=3\}.
\]
In order to analyze the symmetries of this curve, we use the method in \cite{ALV18}. This method takes advantage of the fact that the laplacian operator $\Delta$ commutes with orthogonal transformations, so that the symmetries of the curve $g(x,y)=0$ are contained within the symmetries of $(\Delta g)(x,y)=0$. For $g(x,y)=x^6+6x^5+3y^5+14x^4+16x^3+8x^2+9$, taking laplacians twice we reach $360x^2+720x+360y+336=0$, which corresponds to a parabola symmetric with respect to the line $x=1$. Finally, we can easily check that $S$ is certainly symmetric with respect to the plane $x=x+1$. 

In order to check whether or not $S$ has axial symmetry with respect to a line parallel to the $x$-axis, we intersect $S$ with the plane $x=2$, to get the curve 
\[
\{y^5z+z^2+640=0,\mbox{ }x=2\}.
\]
Proceeding in a similar way, we check that this curve is symmetric with respect to the $x$-axis. Finally, we easily check that $S$ is symmetric with respect to the $x$-axis as well; in fact, one can check this in a straightforward way observing that $F(x,y,z)$ is invariant when we apply the transformation $\{x:=x,\mbox{ }y:=-y,\mbox{ }z:=-z\}$.

One can observe the symmetries of this surface in Figure \ref{fig-implicit}. The symmetry axis is shown in black; the solid sphere corresponds to the intersection point of the symmetry axis, and the symmetry plane. 

\begin{figure}
	\begin{center}
	$$\begin{array}{c}
		\includegraphics[scale=0.35]{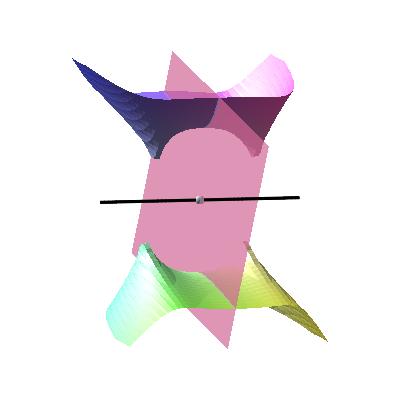} 
		\end{array}$$
	\end{center}
	\caption{Symmetry plane and symmetry axis of $F(x,y,z)=x^6+y^5z+6x^5+14x^4+16x^3+8x^2+z^2=0.$}\label{fig-implicit}
\end{figure}
\end{example}

The method proposed here, however, does not allow to find the central symmetries of the surface. For instance, one can check that the surface in Example \ref{ex-implicit} is symmetric with respect to the origin, but we cannot read this from $F_N(x,y,z)$; in fact, the form of highest degree of {\it any} polynomial in $x,y,z$ defines a surface symmetric with respect to the origin (since it is a conic surface with vertex at the origin).

\section{Conclusion.} \label{sec-conclusion}

We have presented a method to compute the symmetries of a ruled surface defined by means of a rational parametrization, working directly on rational parametric form. In order to do this, we reduce the problem to the parameter space, taking advantage of the fact that, under our hypotheses, any symmetry of the surface has an associated birational transformation of the real plane whose structure can be guessed. Additionally, the method is applicable, under certain conditions, to find some types of symmetries of implicit algebraic surfaces. The proposed method ultimately relies on polynomial system solving. However, the polynomial systems derived from our methods are much more manageable than the polynomial systems derived from naive methods to compute the symmetries. In the future, we plan to extend our ideas to other rational surfaces with structure, as well as to implicit algebraic surfaces under more general conditions. 

\section*{References}

\end{document}